\newtheorem{theorem}{Theorem}
\newtheorem{corollary}[theorem]{Corollary}
\newtheorem{proposition}[theorem]{Proposition}
\newtheorem{lemma}[theorem]{Lemma}
\newtheorem{remark}[theorem]{Remark}
\newtheorem{question}[theorem]{Question}
\newtheorem{definition}[theorem]{Definition}
\newcommand{\seq}[1]{\langle #1\rangle}
\begin{document}

\title[Shadowing as a Structural Property]{Shadowing as a Structural Property of the Space of Dynamical Systems}

\author[J. Meddaugh]{Jonathan Meddaugh}
\address[J. Meddaugh]{Baylor University, Waco TX, 76798}
\email[J. Meddaugh]{Jonathan\_Meddaugh@baylor.edu}

\subjclass[2020]{37B65, 37B45}
\keywords{shadowing, pseudo-orbit tracing property, topological dynamics}

\begin{abstract} We demonstrate that there is a large class of compact metric spaces for which the shadowing property can be characterized as a structural property of the space of dynamical systems. We also demonstrate that, for this class of spaces, in order to determine whether a system has shadowing, it is sufficient to check that \emph{continuously generated} pseudo-orbits can be shadowed.
\end{abstract}

\maketitle

\section{Introduction}

For a function $f:X\to X$ on a metric space $X$, a $\delta$-pseudo-orbit is a sequence $\seq{x_i}$ in $X$ which satisfies $d(f(x_i),x_{i+1})<\delta$. These approximations of orbits arise naturally when orbits are computed numerically and can diverge significantly from the orbits that they are approximating. However, there is a large class of functions for which sufficiently precise pseudo-orbits can be tracked within a given level of tolerance. Such systems are said to have the \emph{shadowing property} (or \emph{pseudo-orbit tracing property}). It is immediately clear that this property is important in numerical modeling of dynamical systems, especially for those in which errors are likely to grow (see \cite{Corless, Corless2, palmer, Pearson}).

While it has natural applications in the setting of numerical modeling, shadowing has significant theoretical importance. Indeed, the origin of the shadowing property begins with analysis of of Anosov and Axiom A diffeomorphisms--Bowen implicitly used this property in his proof that the nonwandering set of an Axiom A diffeomorphism is a factor of a subshift of finite type \cite{Bowen}. Later, Sinai and Bowen demonstrated explicitly that the shadowing property holds for large subsystems of these diffeomorphisms \cite{bowen-markov-partitions, sinai}. Since then, it has been observed that shadowing plays an important role in stability theory \cite{Pil, robinson-stability, walters} and in characterizing $\omega$-limit sets \cite{BGOR-DCDS, Bowen,  MR}. Shadowing has also been shown to be a relatively common property in the space of dynamical systems on certain classes of spaces \cite{BMR-Dendrites, Mazur-Oprocha, Meddaugh-Genericity,  Mizera, Odani, Pilyugin-Plam,  Yano}.

In this paper, we demonstrate that, for a large class of spaces, the shadowing property is a \emph{structural property} of the space of dynamical systems ($X^X$ or $\mathcal C(X)$), in that whether a system has shadowing can be observed only by examining its neighborhood basis in that space. In particular, the main results of this paper are the following theorems.

{
	\renewcommand*{\thetheorem}{\ref{shadowing in X^X}}
	\addtocounter{theorem}{-1}
	\begin{theorem}
		Let $X$ be a compact metric space and $f\in \mathcal C(X)$. The dynamical system $(X,f)$ has shadowing if and only if for every $\epsilon>0$ there exists $\delta>0$ such that if  $g\in X^X$ with $\rho(g,f)<\delta$, then each $g$-orbit  is $\epsilon$-shadowed by an $f$-orbit. 
	\end{theorem}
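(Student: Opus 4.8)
The two directions are of quite different character, so I would handle them separately.

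\emph{Shadowing implies the condition.} Suppose $(X,f)$ has shadowing and fix $\epsilon>0$. Choose $\delta>0$ so that every $\delta$-pseudo-orbit of $f$ is $\epsilon$-shadowed by an $f$-orbit. If $g\in X^X$ with $\rho(g,f)<\delta$ and $\seq{x_i}$ is any $g$-orbit, then for each $i$ we have $d(f(x_i),x_{i+1})=d(f(x_i),g(x_i))\le\rho(f,g)<\delta$, so $\seq{x_i}$ is a $\delta$-pseudo-orbit of $f$ and hence is $\epsilon$-shadowed by an $f$-orbit. This is exactly the stated condition.

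\emph{The condition implies shadowing.} I would argue by contraposition: assuming $(X,f)$ lacks shadowing, I will produce $\epsilon>0$ such that for every $\delta>0$ there is $g\in X^X$ with $\rho(g,f)<\delta$ having an orbit that is not $\epsilon$-shadowed by any $f$-orbit. Two routine reductions come first: since failure of shadowing at a scale $\epsilon^*$ forces failure at every smaller scale, I may take $\epsilon\in(0,\epsilon^*]$ as small as convenient (in particular small relative to a modulus of uniform continuity for $f$); and, by the standard compactness argument, the failure of $\epsilon$-shadowing yields, for every $\eta>0$, a \emph{finite} $\eta$-pseudo-orbit that is not $\epsilon$-shadowed. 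Fix $\delta>0$, pick $\eta\in(0,\delta)$ small, and let $x_0,\dots,x_n$ be a finite $\eta$-pseudo-orbit that is not $\epsilon$-shadowed, chosen of minimal length. If this sequence is \emph{consistent} — meaning $x_i=x_j$ (for $i,j<n$) implies $x_{i+1}=x_{j+1}$ — then $g\in X^X$ defined by $g(x_i)=x_{i+1}$ for $i<n$ and $g=f$ elsewhere is well defined, satisfies $\rho(g,f)<\eta<\delta$, and its orbit through $x_0$ begins $x_0,x_1,\dots,x_n$; since no $f$-orbit $\epsilon$-shadows the block $x_0,\dots,x_n$, none $\epsilon$-shadows this $g$-orbit, contradicting the hypothesis.

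The real obstacle is that a pseudo-orbit need not be consistent, and an inconsistent sequence is never an orbit of any map; so the substance of the theorem is precisely that shadowing the consistent (``lasso-shaped'') pseudo-orbits forces shadowing of all pseudo-orbits. To bridge this I would exploit minimality: if $x_i=x_j$ with $i<j$, deleting the block $x_{i+1},\dots,x_j$ produces a strictly shorter $\eta$-pseudo-orbit (legitimate since $f(x_i)=f(x_j)$ lies within $\eta$ of $x_{j+1}$), which by minimality \emph{is} $\epsilon$-shadowed; moreover any two competing successors of a repeated point lie within $2\eta$ of one another, so for $\eta$ small relative to $\epsilon$ and to the modulus of continuity of $f$ the image set of the pseudo-orbit carries enough slack. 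The plan is to combine these observations to replace $x_0,\dots,x_n$ by a consistent $\eta'$-pseudo-orbit — with $\eta'$ still controlled by $\eta$ and the continuity of $f$ — that remains un-$\epsilon$-shadowed, and then invoke the construction above. Making this replacement rigorous, especially for pseudo-orbits that repeatedly visit points of $X$ admitting no nearby distinct point, is where I expect the bulk of the difficulty to lie; the remainder is bookkeeping with $\epsilon$, $\delta$, and the continuity of $f$.
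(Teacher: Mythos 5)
Your easy direction is fine and matches the paper. The hard direction, however, stops exactly where the theorem actually lives: you correctly identify that the obstacle is an inconsistent pseudo-orbit (a point revisited with different successors, which can never be an orbit of a single map), but your plan for resolving it --- minimality plus block deletion plus ``the successors are within $2\eta$'' --- is not an argument, and you say yourself that the hard case (revisits to points with no nearby distinct points) is left open. Note also that block deletion works against you: by minimality the shortened pseudo-orbit \emph{is} $\epsilon$-shadowed, but that gives no control on the original sequence and no way to preserve its non-shadowability while making it consistent, so the contradiction you want never materializes from those observations alone.

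For comparison, the paper's proof of this direction supplies precisely the two mechanisms your sketch lacks. First, it splits $X$ into $S=\{x: B_\beta(x)\text{ is infinite}\}$ and the finite exceptional set $X\setminus S$; revisits inside $S$ are handled by perturbing the pseudo-orbit itself into an \emph{injective} nearby $\gamma$-pseudo-orbit (moving each entry by at most $\beta$), which is automatically consistent and hence realizable as an orbit of some $g\in X^X$ with $\rho(g,f)\le\gamma$. Second, and this is the genuinely nontrivial missing idea, revisits to a point $p\in X\setminus S$ (your ``points admitting no nearby distinct point'') are handled by Lemma \ref{periodic or not}, which uses the hypothesis of the theorem \emph{again}: from a repeated visit one extracts a consistent periodic $\gamma$-pseudo-orbit through $p$, realizes it as an orbit of a nearby $g$, and the assumed $\epsilon$-shadowing with $B_\epsilon(p)=\{p\}$ forces $p$ to be genuinely $f$-periodic and forces any sufficiently fine pseudo-orbit entering the orbit of $p$ to track the true $f$-orbit from then on; the tail of the pseudo-orbit can then be replaced by that true orbit, restoring consistency. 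Without this second use of the hypothesis the statement is not within reach of bookkeeping --- so as it stands your proposal has a genuine gap rather than a complete alternative proof.
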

}

{
	\renewcommand*{\thetheorem}{\ref{perturbable}}
	\addtocounter{theorem}{-1}
	\begin{theorem}
	Let $X$ be a compact metric space which is perturbable and let $f\in C(X)$. The dynamical system $(X,f)$ has shadowing if and only if for every $\epsilon>0$ there exists $\delta>0$ such that if $g\in \mathcal C(X)$ with $\rho(g,f)<\delta$, then each $g$-orbit  is $\epsilon$-shadowed by an $f$-orbit.
	\end{theorem}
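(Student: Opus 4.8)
The theorem is an equivalence, and one direction is essentially free. If $(X,f)$ has shadowing, then Theorem~\ref{shadowing in X^X} already produces, for each $\eps>0$, a $\delta>0$ such that \emph{every} $g\in X^X$ with $\rho(g,f)<\delta$ has all of its orbits $\eps$-shadowed by $f$-orbits; since $\mathcal C(X)\subseteq X^X$, the same $\delta$ witnesses the conclusion of the present theorem. So perturbability is needed only for the converse, and the entire problem is to upgrade the $X^X$-characterization of shadowing coming from Theorem~\ref{shadowing in X^X} to a $\mathcal C(X)$-characterization.

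For the converse, the plan is to verify shadowing directly through pseudo-orbits. First I would make the standard reduction: by a routine compactness argument (extract a convergent subsequence of tracing points and use continuity of the iterates of $f$), it suffices, given $\eps>0$, to find $\delta>0$ so that every \emph{finite} $\delta$-pseudo-orbit $\seq{x_i}=x_0,\dots,x_n$ of $f$ admits $z\in X$ with $d(f^i(z),x_i)<\eps$ for $0\le i\le n$. Fix such an $\eps$ and invoke the $\mathcal C(X)$-hypothesis at tolerance $\eps/2$ to get $\delta_1>0$ such that every $g\in\mathcal C(X)$ with $\rho(g,f)<\delta_1$ has all of its orbits $(\eps/2)$-shadowed by $f$-orbits. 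The key step is then to manufacture from the finite pseudo-orbit a \emph{continuous} map that realizes it: perturb $x_0,\dots,x_n$ to pairwise distinct points $x_0',\dots,x_n'$ with each $d(x_i,x_i')$ as small as desired (and hence, via the modulus of continuity of $f$, with $d(x_{i+1}',f(x_i'))$ still small), and use that $X$ is perturbable to obtain $g\in\mathcal C(X)$ with $\rho(g,f)<\delta_1$ agreeing on $\{x_0',\dots,x_{n-1}'\}$ with the assignment $x_i'\mapsto x_{i+1}'$. Then $g^i(x_0')=x_i'$ for $i\le n$, so the point $z$ whose $f$-orbit $(\eps/2)$-shadows the $g$-orbit of $x_0'$ satisfies $d(f^i(z),x_i)\le d(f^i(z),x_i')+d(x_i',x_i)<\eps$ for $i\le n$ (choosing the spreading scale below $\eps/2$), which is exactly what the reduction demands.

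The one genuinely nontrivial ingredient is this perturbation step, and it is where I expect the hypothesis to be used in full. The naive choice ``$g=f$ off the pseudo-orbit, $g(x_i)=x_{i+1}$ on it'' is almost never continuous and need not even be well defined, so some structural feature of $X$ is required to interpolate near the prescribed points while keeping $\rho(g,f)$ below $\delta_1$; this is precisely what I anticipate ``perturbable'' to encode---the ability to modify any $f\in\mathcal C(X)$, on neighborhoods of a prescribed finite set, so as to take prescribed nearby values, at the cost of an arbitrarily small change in $\rho$. The remaining labor is bookkeeping: reconciling the three competing small quantities---the spreading scale for the $x_i'$, the perturbation size $\delta_1$, and the pseudo-orbit constant $\delta$---through the uniform continuity of $f$, arranging the $x_i'$ to be distinct (which I expect the perturbability hypothesis to underwrite), and checking that the compactness reduction legitimately confines the whole construction to a neighborhood of \emph{finitely} many points, which is essential since the point set of an infinite pseudo-orbit may accumulate in a way that admits no such finite continuous interpolation.
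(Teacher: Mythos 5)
Your overall architecture matches the paper's: the forward direction is free, and the converse proceeds by turning a pseudo-orbit into a genuine orbit of a nearby continuous map via perturbability (the paper's Lemma \ref{finite approx} does exactly the ``finite interpolation'' step you describe, and your compactness reduction to finite pseudo-orbits is the same device the paper uses at the end when it extracts a limit of the tracing points $q_N$). However, there is a genuine gap at the step you flag as ``arranging the $x_i'$ to be distinct (which I expect the perturbability hypothesis to underwrite).'' Perturbability does not underwrite it. The obstruction is isolated points of $X$: if $p$ is isolated and the pseudo-orbit visits $p$ at two positions $i<j$ with $x_{i+1}\neq x_{j+1}$, then no function --- continuous or not --- can realize the pseudo-orbit as an orbit, and $p$ cannot be moved to a nearby distinct point because there are no nearby points. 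Perturbable spaces can have many isolated points (every compact totally disconnected space is perturbable by Proposition \ref{lc is lp}, e.g.\ a convergent sequence with its limit), so this case cannot be dismissed.

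The paper closes this gap with Lemma \ref{periodic or nonrepeating}, which uses the structural hypothesis itself to show that for a sufficiently fine pseudo-orbit, an isolated point visited twice must in fact be periodic for $f$, and from its first appearance onward the pseudo-orbit stays within $\gamma/3$ of the true $f$-orbit of that point. This permits a case split: where repeats occur only at non-isolated points, one spreads the pseudo-orbit into distinct nearby points exactly as you propose; where a repeat occurs at an isolated point, one replaces the tail of the pseudo-orbit by the genuine periodic orbit, which is trivially realizable as an orbit of a perturbed map. Your proposal needs this additional lemma (or an equivalent treatment of twice-visited isolated points) to be complete; the rest of your bookkeeping is sound.
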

}

The organization of this paper is as follows. In Section \ref{Prelim} we formalize the definitions and notation that we will use for the remainder of the paper. In Section \ref{X^X}, we focus on the space $X^X$ of not necessarily continuous dynamical systems and develop the theory leading up to Theorem \ref{shadowing in X^X}. We also prove results for nonautonomous dynamical systems. Section \ref{C(X)} discusses analogous results for the space $\mathcal C(X)$ and culminates in the proof of Theorem \ref{perturbable}. Finally, in Section \ref{other} we discuss the results of Sections \ref{X^X} and \ref{C(X)} from different angles. We develop variations of the shadowing property that encapsulate these results and also demonstrate that shadowing can be viewed as a weak form of continuity for the function which maps a dynamical system to its set of orbits.

\section{Preliminaries} \label{Prelim}

A \emph{dynamical system} is a pair $(X,f)$ consisting of a compact metric space $X$ and a function $f:X\to X$. This function need not be continuous, and is not assumed to be unless otherwise stated. 

For a fixed space $X$, we can identify a dynamical system with the function alone, allowing us to consider the space of dynamical systems on $X$. In particular, let $X^X$ denote the space of functions from $X$ to $X$ with topology generated by the supremum metric
\[\rho(f,g)=\sup_{x\in X}d(f(x),g(x)).\]
Also, let $\mathcal C(X)$ denote the subspace of $X^X$ consisting of the continuous functions (maps) from $X$ to $X$.

For $x\in X$ and $f\in X^X$, the \emph{orbit of $x$ under $f$} (or, the $f$-orbit of $x$) is the sequence $\seq{f^i(x)}_{i\in\omega}$ where $f^0$ denotes the identity. For $\delta>0$, a \emph{$\delta$-pseudo-orbit for $f$} is a sequence $\seq{x_i}_{i\in\omega}$ such that for all $i\in\omega$, $d(f(x_i),x_{i+1})<\delta$. For $\epsilon>0$, we say that a sequence $\seq{x_i}_{i\in\omega}$ in $X$ \emph{$\epsilon$-shadows} a sequence $\seq{y_i}_{i\in\omega}$ in $X$ provided that $d(x_i,y_i)<\epsilon$ for all $i\in\omega$.

A dynamical system $(X,f)$ has \emph{shadowing} (or the \emph{pseudo-orbit tracing property}, sometimes denoted \emph{POTP}) provided that for all $\epsilon>0$, there exists $\delta>0$ such that if $\seq{x_i}$ is a $\delta$-pseudo-orbit for $f$, then there exists $z\in X$ such that for all $i$, $d(x_i,f^i(z))<\epsilon$, i.e. the orbit of $z$ \emph{$\epsilon$-shadows} the pseudo-orbit $\seq{ x_i}$. 

For the purposes of this paper, it is useful to consider dynamical systems consisting of \emph{sequences} of functions from $X$ to $X$. When a distinction is necessary, we will refer to dynamical systems consisting of a single function as \emph{autonomous} and those consisting of several functions as \emph{nonautonomous}. We will endow the space, $(X^X)^\omega$, of such systems with the uniform topology, with metric given by
\[\rho(\seq{f_i},\seq{g_i})=\sup_{i\in\omega}\rho(f_i,g_i).\]

Let $\seq{f_i}\in (X^X)^\omega$. For $x\in X$, the \emph{orbit of $x$ under $\seq{f_i}$} (or, the $\seq{f_i}$-orbit of $x$) is the sequence $\seq{f^i_0(x)}$ where $f^i_0$ denotes the composition of the first $i$-many functions in the sequence, i.e. $f^i_0=f_{i-1}\circ f_{i-2}\circ\cdots\circ f_1\circ f_0$ and $f^0_0$ denotes the identity. For $\delta>0$, a \emph{$\delta$-pseudo-orbit for $\seq{f_i}$} is a sequence $\seq{x_i}$ such that for all $i\in\omega$, $d(f_i(x_i),x_{i+1})<\delta$.

We say that a nonautonomous dynamical system $(X,\seq{f_i})$  has \emph{shadowing} provided that for all that for all $\epsilon>0$, there exists $\delta>0$ such that if $\seq{x_i}$ is a $\delta$-pseudo-orbit for $\seq{f_i}$, then there exists $z\in X$ such that for all $i$, $d(x_i,f^i_0(z))<\epsilon$, i.e. the orbit of $z$ \emph{$\epsilon$-shadows} the pseudo-orbit $\seq{ x_i}$.

It is worth noting that the autonomous dynamical system $(X,f)$ can be naturally identified with the dynamical system $(X,\seq{f})$ since it is clear that the dynamical properties of the systems $(X,f)$ and $(X,\seq{f})$ are identical.

\section{Shadowing as a structural property in $X^X$} \label{X^X}

In this section, we examine shadowing as a structural property of dynamical systems in the \emph{space} of (not necessarily continuous) dynamical systems. We begin by developing a characterization of the shadowing property in the space $(X^X)^\omega$ of nonautonomous dynamical systems on $X$. 

As a first step, we make the following observation:

\begin{remark} \label{nearby}
	If $\seq{f_i},\seq{g_i}\in (X^X)^\omega$ with $\rho(\seq{f_i},\seq{g_i})<\delta$, then every $\seq{g_i}$-orbit is a $\delta$-pseudo-orbit for $\seq{f_i}$.
\end{remark} 

\begin{proof}
	Fix $\seq{f_i},\seq{g_i}$ with $\rho(\seq{f_i},\seq{g_i})<\delta$ and let $x\in X$. Then $\seq{g^i_0(x)}$ satisfies
	\[d(f_i(g^i_0(x)),g^{i+1}_0(x))=d(f(g^i_0(x)),g_i(g^{i}_0(x)))\leq\rho(f_i,g_i)\leq\rho(\seq{f_i},\seq{g_i})<\delta\]
	and is, therefore, a $\delta$-pseudo-orbit for $\seq{f_i}$. \end{proof}

In other words, for a dynamical system, orbits of nearby systems are pseudo-orbits for the original system. As we see in the following lemma, if $\seq{f_i}\in (X^X)^\omega$, the converse is true as well.

\begin{lemma}\label{discontinuous}
	Let $\seq{f_i}\in (X^X)^\omega$, $\delta>0$, and $\seq{x_i}$ be a $\delta$-pseudo-orbit for $\seq{f_i}$. Then there exists $\seq{g_i}\in (X^X)^\omega$ such that $\rho(\seq{f_i},\seq{g_i})\leq\delta$ and $\seq{x_i}$ is an orbit for $\seq{g_i}$.
\end{lemma}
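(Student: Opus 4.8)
The plan is to construct $\seq{g_i}$ explicitly by "rerouting" each $f_i$ at the single point $x_i$ so that it lands exactly on $x_{i+1}$, leaving $f_i$ unchanged everywhere else. Concretely, for each $i\in\omega$ define $g_i:X\to X$ by $g_i(x)=f_i(x)$ for $x\neq x_i$ and $g_i(x_i)=x_{i+1}$. This is where the freedom to use discontinuous functions is essential: we make a pointwise modification with no regard for continuity, which is exactly why this lemma lives in $(X^X)^\omega$ rather than in $\mathcal C(X)^\omega$.

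The verification then has two parts. First, the distance estimate: for each $i$, $\rho(f_i,g_i)=\sup_{x\in X}d(f_i(x),g_i(x))$, and since $f_i$ and $g_i$ agree except at $x_i$, this supremum is just $d(f_i(x_i),g_i(x_i))=d(f_i(x_i),x_{i+1})$, which is $<\delta$ because $\seq{x_i}$ is a $\delta$-pseudo-orbit for $\seq{f_i}$. Taking the supremum over $i$ gives $\rho(\seq{f_i},\seq{g_i})\le\delta$. (One should be mildly careful: if $X$ is a single point the supremum is over an empty difference, but then the bound is trivially $0\le\delta$; more importantly the individual bounds are strict but the supremum over $i$ need only be $\le\delta$, matching the statement.) Second, the orbit claim: I must show $g_0^i(x_0)=x_i$ for all $i$, which I would prove by induction on $i$. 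The base case $g_0^0(x_0)=x_0$ is the identity convention. For the inductive step, assuming $g_0^i(x_0)=x_i$, we get $g_0^{i+1}(x_0)=g_i(g_0^i(x_0))=g_i(x_i)=x_{i+1}$ by the definition of $g_i$ at its distinguished point. Hence $\seq{x_i}$ is precisely the $\seq{f_i}$... rather, the $\seq{g_i}$-orbit of $x_0$.

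There is essentially no hard part here — the lemma is a routine "discontinuous interpolation" argument, and the only thing to be careful about is bookkeeping with the composition convention $g_0^i=g_{i-1}\circ\cdots\circ g_0$ and making sure the modification of $g_i$ at $x_i$ doesn't interfere with the value $g_0^i(x_0)=x_i$ that feeds into it (it doesn't, since we modify $g_i$ precisely at $x_i$ to send it to $x_{i+1}$, which is what we want). The only conceptual point worth flagging in the write-up is that this is the exact converse of Remark \ref{nearby} in the discontinuous category, so that together they say: for $\seq{f_i}\in(X^X)^\omega$, the $\delta$-pseudo-orbits of $\seq{f_i}$ are exactly the orbits of systems within $\delta$ of $\seq{f_i}$ in $\rho$. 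This equivalence is what makes shadowing a structural property in $X^X$, so it is worth stating immediately after the proof.
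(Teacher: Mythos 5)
Your construction is exactly the paper's: redefine $g_i$ to agree with $f_i$ everywhere except at $x_i$, where it is sent to $x_{i+1}$, and note that $\rho(f_i,g_i)=d(f_i(x_i),x_{i+1})<\delta$ so the uniform distance is at most $\delta$. The proof is correct and takes essentially the same approach as the paper, with the induction on the composition convention made slightly more explicit.
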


\begin{proof}
	For each $i\in\omega$, define $g_i$ as follows: $g_i(y)=f_i(y)$ for all $y\in X\setminus\{x_i\}$ and $g_i(x_i)=x_{i+1}$.	By construction, the sequence $\seq{x_i}$ is an orbit of $\seq{g_i}$.
	
	Finally, it is easily seen that for each $i\in\omega$, $\rho(f_i,g_i)=d(x_{i+1},f_i(x_i))<
	\delta$, and thus $\rho(\seq{f_i},\seq{g_i})\leq\delta$. 
\end{proof}

Since pseudo-orbits are identifiable with orbits for nearby systems in this setting, we can easily verify that the shadowing property can be characterized as follows.

\begin{theorem} \label{shadowing in sequences}
	Let $X$ be a metric space and $\seq{f_i}\in (X^X)^\omega$. The dynamical system $(X,\seq{f_i})$ has shadowing if and only if for all $\epsilon>0$, there exists $\delta>0$ such that if $\seq{g_i}\in(X^X)^\omega$ with $\rho(\seq{g_i},\seq{f_i})<\delta$, then each $\seq{g_i}$-orbit  is $\epsilon$-shadowed by an $\seq{f_i}$-orbit.
\end{theorem}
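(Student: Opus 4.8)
The plan is to prove the two implications separately, using Remark~\ref{nearby} and Lemma~\ref{discontinuous} as the bridge between pseudo-orbits and orbits of nearby systems.

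\medskip

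\noindent\textbf{($\Rightarrow$)} Suppose $(X,\seq{f_i})$ has shadowing. Given $\eps>0$, apply the shadowing property to obtain $\delta>0$ witnessing that every $\delta$-pseudo-orbit for $\seq{f_i}$ is $\eps$-shadowed by an $\seq{f_i}$-orbit. I claim this same $\delta$ works in the structural statement. Indeed, if $\seq{g_i}\in(X^X)^\omega$ with $\rho(\seq{g_i},\seq{f_i})<\delta$, then by Remark~\ref{nearby} every $\seq{g_i}$-orbit is a $\delta$-pseudo-orbit for $\seq{f_i}$, hence is $\eps$-shadowed by an $\seq{f_i}$-orbit. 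That is the desired conclusion.

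\medskip

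\noindent\textbf{($\Leftarrow$)} Suppose that for every $\eps>0$ there is $\delta>0$ such that $\rho(\seq{g_i},\seq{f_i})<\delta$ implies every $\seq{g_i}$-orbit is $\eps$-shadowed by an $\seq{f_i}$-orbit. Fix $\eps>0$ and take such a $\delta>0$; I claim $\delta/2$ (say) witnesses shadowing for $(X,\seq{f_i})$. Let $\seq{x_i}$ be a $(\delta/2)$-pseudo-orbit for $\seq{f_i}$. By Lemma~\ref{discontinuous} there is $\seq{g_i}\in(X^X)^\omega$ with $\rho(\seq{f_i},\seq{g_i})\le\delta/2<\delta$ for which $\seq{x_i}$ is an $\seq{g_i}$-orbit. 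By hypothesis, this orbit is $\eps$-shadowed by some $\seq{f_i}$-orbit, i.e., there is $z\in X$ with $d(x_i,f^i_0(z))<\eps$ for all $i$. Hence $(X,\seq{f_i})$ has shadowing. (The shrinking from $\delta$ to $\delta/2$ is only needed because Lemma~\ref{discontinuous} yields the weak inequality $\rho\le\delta$; alternatively one can observe that a $\delta$-pseudo-orbit satisfies the strict bound $d(f_i(x_i),x_{i+1})<\delta$, so in fact $\rho(\seq{f_i},\seq{g_i})<\delta$ already and no shrinking is required.)

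\medskip

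Neither direction presents a genuine obstacle — the content is entirely front-loaded into Remark~\ref{nearby} and Lemma~\ref{discontinuous}, and the theorem is essentially a repackaging of those two facts. The only point requiring a moment's care is the interplay between the strict inequality ``$\rho<\delta$'' appearing in the structural condition and the non-strict ``$\rho\le\delta$'' produced by Lemma~\ref{discontinuous}; as noted above this is harmless since pseudo-orbits are defined with a strict bound, so the function $\seq{g_i}$ constructed in Lemma~\ref{discontinuous} actually lies strictly within distance $\delta$ of $\seq{f_i}$.
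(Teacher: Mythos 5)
Your proof is correct and is essentially identical to the paper's: the forward direction via Remark~\ref{nearby} and the reverse direction via Lemma~\ref{discontinuous} with $\delta$ halved. One caveat: your parenthetical claim that no shrinking is needed is wrong --- each $\rho(f_i,g_i)=d(x_{i+1},f_i(x_i))<\delta$ individually, but the supremum over infinitely many $i$ can still equal $\delta$, so the passage to $\delta/2$ (as in your main argument and in the paper) is genuinely required.
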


\begin{proof}
	First, suppose that the dynamical system $(X,\seq{f_i})$ has shadowing. Fix $\epsilon>0$ and choose $\delta>0$ to witness shadowing with respect to $\epsilon$. If $\seq{g_i}$ is a sequence with $\rho(\seq{g_i},\seq{f_i})<\delta$, then every $\seq{g_i}$-orbit is a $\delta$-pseudo-orbit for $\seq{f_i}$, and is thus shadowed by some $\seq{f_i}$-orbit.
	
	Now, suppose that $(\seq{f_i},X)$ has the indicated property. Fix $\epsilon>0$ and choose $\delta_0>0$ to witness. Define $\delta=\delta_0/2$ and let $\seq{x_i}$ be a $\delta$-pseudo-orbit for $\seq{f_i}$. Then, by Lemma \ref{discontinuous}, there exists $\seq{g_i}$ with $\rho(\seq{f_i},\seq{g_i})\leq\delta<\delta_0$ such that $\seq{x_i}$ is a $\seq{g_i}$-orbit, and as such, is $\epsilon$-shadowed by some $\seq{f_i}$-orbit.
\end{proof}

Since autonomous systems can be viewed as members of $(X^X)^\omega$, as an immediate corollary,  we have the following characterization of shadowing for autonomous systems.

\begin{corollary} \label{shadowing in sequences corollary}
	Let $X$ be a metric space and $f\in X^X$. The dynamical system $(X,f)$ has shadowing if and only if for all $\epsilon>0$, there exists $\delta>0$ such that if $\seq{g_i}\in(X^X)^\omega$ with $\rho(\seq{g_i},\seq{f})<\delta$, then each $\seq{g_i}$-orbit  is $\epsilon$-shadowed by an $f$-orbit. 
\end{corollary}

Of course, this characterization requires knowledge of an entire neighborhood of $\seq{f}$ in $(X^X)^\omega$. For an autonomous dynamical system, a characterization requiring only information in $X^X$ would be far preferable. The primary obstacle to such a characterization is that a non-periodic pseudo-orbit which visits the same point twice cannot be realized as an orbit of an autonomous system---in order to guarantee shadowing, such pseudo-orbits need to be accounted for.

If we assume that our domain space contains no isolated points, we are able to bypass this obstacle for continuous dynamical systems by perturbing such pseudo-orbits. This will allow us to characterize shadowing as a structural property for autonomous dynamical systems on spaces with no isolated points.

\begin{lemma} \label{functionally generated}
	Let $f\in\mathcal C(X)$ and $\gamma>0$. Then if $\seq{x_i}$ a $\gamma$-pseudo-orbit for $f$ for which $x_i=x_j$ implies that $x_{i+1}=x_{j+1}$, then  there exists $g\in X^X$ with $\rho(g,f)\leq\gamma$ and $z\in X$  such that for all $i\leq N$, $g^i(z)=x_i$.
\end{lemma}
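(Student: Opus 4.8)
The plan is to build the function $g$ on a finite set and then extend. Since $\seq{x_i}$ is a $\gamma$-pseudo-orbit satisfying the consistency condition $x_i = x_j \Rightarrow x_{i+1} = x_{j+1}$, the assignment $x_i \mapsto x_{i+1}$ is a well-defined function on the finite set $F = \{x_0, x_1, \dots, x_N\}$ (or on $\{x_0, \dots, x_{N-1}\}$, with the image $x_N$ left unconstrained by this rule beyond what we choose). First I would set $z = x_0$ and define $g$ on $F$ by $g(x_i) = x_{i+1}$ for $i < N$; this is well-defined precisely because of the hypothesis. On $F$ we then have $d(g(x_i), f(x_i)) = d(x_{i+1}, f(x_i)) < \gamma$ by the pseudo-orbit condition, so $g$ agrees with $f$ to within $\gamma$ at every point of $F$ where it has been defined.

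Next I would extend $g$ to all of $X$ by simply setting $g(y) = f(y)$ for $y \in X \setminus F$ (and, if $x_N \notin \{x_0,\dots,x_{N-1}\}$, also $g(x_N) = f(x_N)$). Then $\rho(g,f) = \sup_{x\in X} d(g(x),f(x)) \leq \gamma$, since the supremum is the max of $0$ (off $F$) and finitely many quantities each $< \gamma$. By construction $g^0(z) = z = x_0$, and an easy induction on $i \leq N$ gives $g^i(z) = g(g^{i-1}(z)) = g(x_{i-1}) = x_i$.

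There is a subtlety to handle: if some $x_i$ with $i < N$ coincides with $x_j$ for another index, the definition $g(x_i) = x_{i+1}$ must not conflict with $g(x_j) = x_{j+1}$ — but the hypothesis $x_i = x_j \Rightarrow x_{i+1} = x_{j+1}$ is exactly what rules this out. So the only real content is checking well-definedness of $g$ on $F$, and that is immediate from the hypothesis. I expect no genuine obstacle here; the lemma is essentially the autonomous analogue of Lemma~\ref{discontinuous}, with the consistency condition playing the role that guarantees the piecewise definition is a genuine function. (Note that continuity of $f$ is not actually needed for this construction — $g$ need not be continuous — so the hypothesis $f \in \mathcal C(X)$ is presumably there only to match the context of the subsequent perturbation argument, where $f$'s continuity is used to control the size of the neighborhood on which $g$ differs from $f$.)
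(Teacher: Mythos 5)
Your construction is essentially the same as the paper's: redefine $f$ at the pseudo-orbit points by $g(x_i)=x_{i+1}$, leave it equal to $f$ elsewhere, and use the consistency hypothesis $x_i=x_j\Rightarrow x_{i+1}=x_{j+1}$ for well-definedness; your remark that continuity of $f$ is never used is also accurate. The only discrepancy is that the ``$N$'' in the statement is a leftover typo (it is never quantified, and the lemma is later invoked to get $g^i(z)=x_i$ for \emph{all} $i\in\omega$), so the paper defines $g$ on the whole, possibly infinite, set $\{x_i:i\in\omega\}$ rather than on a finite initial segment --- your argument works verbatim there, except that the supremum of infinitely many quantities each less than $\gamma$ only gives $\rho(g,f)\leq\gamma$, which is exactly what the lemma asserts.
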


\begin{proof}
	Fix $\gamma>0$ and let $\seq{x_i}$ be a $\gamma$-pseudo-orbit for $f$ satisfying the hypotheses of the lemma. 
	
	Now, define $g:X\to X$ by
	\[g(x)=\begin{cases} 
		x_{i+1} & x=x_i \\
		f(x) & x\notin\{x_i: i\in\omega\}
	\end{cases}.\]
	Note that $g$ is well-defined since if $x_i=x_j$, then $g(x_i)=x_{i+1}=x_{j+1}=g(y_i)$. 
	
	By construction, $g(x_i)=x_{i+1}$ and for all $x\in X$, $d(f(x),g(x))\leq\sup\{d(f(x_i),x_{i+1}):i\in\omega\}\leq\gamma$, i.e. $\rho(f,g)\leq\gamma$.
\end{proof}

\begin{lemma} \label{periodic or not}
	Let $X$ be a compact metric space and let $f\in C(X)$ such that for every $\epsilon>0$ there exists $\delta>0$ such that if $g\in  X^X$ with $\rho(g,f)<\delta$, then each $g$-orbit  is $\epsilon$-shadowed by an $f$-orbit. If $p\in X$ is isolated, then for every $\delta'$ there exists $\gamma_p>0$ such that if $\seq{x_i}$ is a $\gamma_p$-pseudo-orbit for $f$, then either
	\begin{enumerate}
		\item $|\{i\in\omega:p=x_i\}|\leq 1$ or
		\item $p$ is periodic and there exists $M\in\omega$ with $x_M$ in the orbit of $p$ and for any such $M$, $\sup \{d({x_{M+i}},{f^i(x_M)})\}<\delta'$.
	\end{enumerate}
	
\end{lemma}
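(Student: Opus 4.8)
The plan is to argue by contradiction, assuming that for some isolated $p$ and some $\delta'>0$ no threshold $\gamma_p$ works. Then for each $n\in\omega$ there is a $(1/n)$-pseudo-orbit $\seq{x_i^{(n)}}$ for $f$ that violates both (1) and (2): it hits $p$ at least twice, yet either $p$ is not periodic, or $p$ is periodic but $\sup\{d(x^{(n)}_{M+i},f^i(x^{(n)}_M))\}\geq\delta'$ for some (equivalently every) $M$ with $x^{(n)}_M$ in the orbit of $p$. The goal is to manufacture from such pseudo-orbits a genuine $g$-orbit for some $g\in X^X$ with $\rho(g,f)$ as small as we like, but which cannot be $\epsilon$-shadowed by any $f$-orbit for a suitable fixed $\epsilon$, contradicting the hypothesized property of $f$.

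The key device is Lemma \ref{functionally generated}: a $\gamma$-pseudo-orbit that is \emph{functionally consistent} (i.e. $x_i=x_j\Rightarrow x_{i+1}=x_{j+1}$) is realized exactly, up to any finite length, as an orbit of some $g$ with $\rho(g,f)\leq\gamma$. So the real work is to build, from a short violating pseudo-orbit, a functionally consistent pseudo-orbit that still fails to be shadowed. Suppose $p$ appears at two indices $a<b$ in the pseudo-orbit. Case (i): $p$ is not periodic. Consider the segment from $x_0$ up to index $a$, where $x_a=p$, followed by the true $f$-orbit of $p$, i.e. set $y_i=x_i$ for $i\leq a$ and $y_{a+j}=f^j(p)$ for $j\geq 0$. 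Since $p$ is not periodic its forward $f$-orbit never returns to $p$, so this sequence is functionally consistent (the only repeated value $p$, if it recurs before index $a$, would already force a consistency issue; one first replaces the initial segment by a shortest walk reaching $p$ so that $p$ occurs only at index $a$). This $y$-sequence is a $\gamma$-pseudo-orbit, so by Lemma \ref{functionally generated} it is an initial $g$-orbit; but for the contradiction I need a \emph{non-shadowable} infinite $g$-orbit, so instead I should pick $\epsilon$ in advance and use the characterization to derive a contradiction from the non-shadowability directly — which is where the two failure clauses really bite.

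Here is the cleaner route. Apply the hypothesis with $\epsilon = \delta'/3$ (or some fixed small $\epsilon$ depending only on $\delta'$ and on the isolation gap around $p$) to obtain $\delta>0$. Take $n$ with $1/n<\delta$ and the corresponding violating $(1/n)$-pseudo-orbit $\seq{x_i}$. It hits $p$ at $a<b$. If $p$ is not periodic, shorten to make $p$ occur exactly once (at some index, call it $a$) and then continue by the true orbit of $p$: the resulting sequence is functionally consistent and a $\delta$-pseudo-orbit, hence by Lemma \ref{functionally generated} an orbit of some $g$ with $\rho(g,f)\leq 1/n<\delta$, so (extending arbitrarily past the finite part, say by padding with the true $g$-orbit) it is $\epsilon$-shadowed by an $f$-orbit $\seq{f^i(z)}$. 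But the shadow must track $x_0,\dots$ and then $f^j(p)$; matching near $p$ forces $f^{a}(z)$ within $\epsilon$ of $p$, and since $p$ is isolated with isolation radius, say, $r_p$, if $\epsilon<r_p$ this forces $f^a(z)=p$, so the shadow \emph{is} the orbit of $p$ from then on — consistent, no contradiction yet. So the contradiction has to come from clause (2)'s quantitative failure, i.e. from the periodic case. If $p$ \emph{is} periodic: by clause (2)'s negation there is $M$ with $x_M$ in the orbit of $p$ (so $x_M=f^k(p)$ for some $k$) and $\sup_i d(x_{M+i},f^i(x_M))\geq\delta'$; pick the least such index $j_0$ where $d(x_{M+j_0},f^{j_0}(x_M))\geq\delta'$ and truncate the pseudo-orbit at $M+j_0$. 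Prepend a shortest walk from $x_0$ realizing the early part, and arrange functional consistency on this finite block by, if necessary, perturbing repeated non-$p$ values within $1/n$ using that $X$ has no isolated points except possibly $p$ — here is the one subtlety, handled exactly as in the paragraph preceding the lemma. Then Lemma \ref{functionally generated} yields $g$ with $\rho(g,f)\leq 1/n<\delta$ whose orbit of the realizing point agrees with this pseudo-orbit on the block; extend $g$ past the block arbitrarily. By hypothesis some $f^i(z)$ $\epsilon$-shadows it; in particular $d(f^M(z),x_M)<\epsilon<r_p$, forcing $f^M(z)=x_M=f^k(p)$, hence $f^{M+i}(z)=f^i(x_M)$ for all $i$, and then $d(x_{M+j_0},f^{j_0}(x_M))=d(x_{M+j_0},f^{M+j_0}(z))<\epsilon<\delta'$, contradicting the choice of $j_0$.

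The main obstacle, and the place where care is genuinely needed, is the functional-consistency repair on the finite prefix: before index $a$ (resp. $M$) the pseudo-orbit might repeat some value $q\neq p$ inconsistently, and we must perturb to kill such collisions without moving any point by more than $1/n$, without creating new collisions, and without disturbing the crucial equalities $x_a=p$ / $x_M$ in the orbit of $p$. This is exactly the "no isolated points" maneuver alluded to just before Lemma \ref{functionally generated}, applied only to the non-$p$ points (which, being distinct from the isolated point $p$, lie in the no-isolated-points part of $X$); one picks a fresh nearby value at each repeated occurrence, using that within $1/n$ of any non-isolated point there are infinitely many points, and that the finitely many constraints (don't hit previously used values, don't hit $p$, stay within $1/n$, keep being a pseudo-orbit after the $1/n$ slack) are simultaneously satisfiable. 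The remaining steps — choosing $\epsilon$ below the isolation radius $r_p$ of $p$, invoking the hypothesis, and reading off the contradiction from either the non-periodicity (which makes the "hit $p$ twice" impossible along an exact $g$-orbit that must be shadowed into the non-recurrent true orbit of $p$) or from the quantitative failure in clause (2) — are then routine.
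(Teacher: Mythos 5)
Your reduction to the two cases (non-periodic vs.\ periodic $p$) is right, and you correctly identify Lemma \ref{functionally generated} plus the isolation radius of $p$ as the tools, but neither case is actually closed. The fatal gap is the non-periodic case. When $p$ appears at indices $a<b$ in a fine pseudo-orbit and $p$ is not periodic, you replace everything after the first occurrence of $p$ by the true $f$-orbit of $p$ and then observe, correctly, that this yields ``no contradiction yet'' --- and you never return to supply one. The move you are missing is the one the paper makes: extract the loop $x_a,\dots,x_{b-1}$ (with $x_a=x_b=p$), delete internal repetitions so the loop is injective, and repeat it forever to obtain a functionally consistent $\gamma$-pseudo-orbit $\seq{z_i}$ with $z_{kP}=p$ for all $k$. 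Realizing this as an exact $g$-orbit and shadowing it by the $f$-orbit of some $q$ forces both $q$ and $f^P(q)$ into $B_\epsilon(p)=\{p\}$, hence $f^P(p)=p$; that is how ``$p$ is not periodic'' is refuted, and it cannot be recovered from your construction, which discards the recurrence of $p$ before the hypothesis is invoked. Your closing parenthetical (``non-periodicity makes hitting $p$ twice impossible along an exact $g$-orbit'') gestures at this, but an exact $g$-orbit hitting $p$ at two indices is automatically periodic from the first hit onward (functional consistency forces $x_{a+1}=x_{b+1}$, $x_{a+2}=x_{b+2}$, \dots), so producing such an orbit \emph{is} the loop-closing construction you omitted.

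The periodic case also has a flaw: you force $f^M(z)=x_M$ from $d(f^M(z),x_M)<\epsilon<r_p$, where $x_M=f^k(p)$. But $r_p$ is the isolation radius of $p$, and the other points of the orbit of $p$ need not be isolated (continuity of $f$ does not propagate isolation backwards along the orbit), so this step fails whenever $k\neq 0$ --- and the negation of clause (2) only hands you \emph{some} such $M$, not one with $x_M=p$. The paper avoids shadowing entirely at this stage: a finite-time uniform-continuity estimate on the pseudo-orbit gives $d(x_{M+i},f^i(x_M))<\min\{\epsilon,\delta'/2\}$ for $i\leq P$, so at the step where $f^i(x_M)=p$ the pseudo-orbit lands in $B_\epsilon(p)=\{p\}$, i.e.\ exactly at $p$, and the estimate is then iterated block by block to get the global bound $\delta'$. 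Your discussion of repairing collisions in the prefix addresses a real but secondary issue; the two points above are where the proof breaks.
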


\begin{proof}
	Fix $\delta'>0$. Let $p\in X$ be isolated and suppose that for every $\gamma>0$ there exists a $\gamma$-pseudo-orbit for $f$ in which $p$ appears at least twice (else we can choose $\gamma_p>0$ such that condition (1) holds for every $\gamma_p$-pseudo-orbit).
	Since $p$ is isolated, we can choose $\epsilon>0$ with $B_\epsilon(p)=\{p\}$, and by supposition, there is $\delta>0$ such that if $\rho(g,f)<\delta$, then every $g$-orbit is $\epsilon$-shadowed by an $f$-orbit. Finally, choose $\delta>\gamma>0$.
	
	By supposition, we can find a $\gamma$-pseudo-orbit $\seq{x_i}$ for $f$ in which $p$ appears at least twice---let $M\in\omega$ and $N>0$ be minimally chosen so that $p=x_M=x_{N+M}$.
	
	Define, for $i<N$, $y_i=x_{M+i}$ and consider the finite sequence $y_0,\ldots, y_{N-1}$. Notice that $y_0=p$, $d(f(y_{N-1}),y_0)<\gamma$, and for all $i<N-1$, we have $d(f(y_i),y_{i+1})<\gamma$. Suppose that there is an element of $X$ that occurs more than once in this list. Choose $n<N-1$ and $t>0$ such that $y_n=y_{n+t}$ and define the finite sequence $y'_0,\ldots y'_{N'-1}$ as follows. Define $N'=N-t$ and 
	\[ y'_i=\begin{cases} 
		y_i & i\leq n \\
		y_{i+t} & i>n 
	\end{cases}
	\]
	and observe that $y'_0=p$, $d(f(y'_{N-1}),z_0)<\gamma$, and  for all $i<N'-1$, we have $d(f(y'_i),y'_{i+1})<\gamma$. By iterating this process as necessary, we arrive at a finite sequence $z_0,\ldots z_{P-1}$ with no repeated elements, such that $z_0=p$, $d(f(z_{P-1}), z_0)<\gamma$, and for $i<P-1$, $d(f(z_{i}), z_{i+1})<\gamma$. It is easy to see then that if we define $z_i=z_{i\mod P}$ for $i\geq P$,  $\seq{z_i}$ is a $\gamma$-pseudo-orbit with $z_{kP}=p$ for all $k\in\omega$ which satisfies the hypotheses of Lemma \ref{functionally generated}.
	
	As such, we can find a function $g\in X^X$ with $\rho(g,f)\leq\gamma<\delta$ such that $\seq{z_i}$ is a $g$-orbit, and therefore is $\epsilon$-shadowed by the $f$-orbit of some point. But that point and its image under $f^P$ must lie in $B_\epsilon(p)=\{p\}$, and therefore the shadowing point is $p$ itself and $p$ has period $P$.
	
	Now, by uniform continuity of $f, f^2, \ldots f^{P-1}$, choose $\gamma_p>0$ such that $\gamma_p<\min\{\epsilon,\delta'/2\}/P$ and if $d(a,b)<\gamma_p$, then $d(f^i(a),f^i(b))<\min\{\epsilon,\delta'/2\}/P$ for all $i<P$. Now, let $\seq{x_i}$ be a $\gamma_p$-pseudo-orbit for $f$ with $x_M$ in the orbit of $p$ with $k<P$ chosen such that $f^k(x_M)=p$. Then, for $i\leq P$, we have 
	\begin{align*}
		d(x_{M+i},f^i(x_M))&\leq \sum_{j=0}^{i-1} d\left(f^{(i-j)}(x_{M+j}),f^{(i-j-1)}(x_{M+j+1})\right)\\
		&=d(f(x_{M+i-1}),x_{M+i})+\sum_{j=0}^{i-2} d\left(f^{(i-j)}(f(x_{M+j})),f^{(i-j)}(x_{M+j+1})\right)\\
		&<\gamma_p+\sum_{j=0}^{i-2} \min\{\epsilon,\delta'/2\}/P<\min\{\epsilon,\delta'/2\}
	\end{align*}

	In particular, $x_{M+k}\in B_\epsilon(f^k(x_M))=B_\epsilon(p)=\{p\}$ and by induction (applying the above argument to $x_{M+k}=p$), we can establish that $d(x_{M+k+i},f^i(p))<\delta'/2$ for all $i\in\omega$, and thus $\sup\{d({x_{M+i}},f^i(p))\}\leq\delta'/2<\delta'$ as desired.

\end{proof}

Lemmas \ref{functionally generated} and \ref{periodic or not} allow us to prove the following characterization of shadowing.

\begin{theorem} \label{shadowing in X^X}
	Let $X$ be a compact metric space and $f\in \mathcal C(X)$. The dynamical system $(X,f)$ has shadowing if and only if for every $\epsilon>0$ there exists $\delta>0$ such that if  $g\in X^X$ with $\rho(g,f)<\delta$, then each $g$-orbit  is $\epsilon$-shadowed by an $f$-orbit. 
\end{theorem}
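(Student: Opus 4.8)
The forward implication is immediate: if $(X,f)$ has shadowing, fix $\epsilon>0$ and let $\delta>0$ witness shadowing for $\epsilon$; for any $g\in X^X$ with $\rho(g,f)<\delta$ and any $x\in X$ one has $d(f(g^i(x)),g^{i+1}(x))=d(f(g^i(x)),g(g^i(x)))\le\rho(f,g)<\delta$, so the $g$-orbit of $x$ is a $\delta$-pseudo-orbit for $f$ and is therefore $\epsilon$-shadowed by an $f$-orbit. (This also follows at once from Corollary \ref{shadowing in sequences corollary} applied to the constant sequence $\seq{g}$.)

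For the reverse implication, assume the displayed property. Fix $\epsilon>0$ and apply the property at accuracy $\epsilon/2$ to obtain $\delta'>0$ such that every orbit of every $g\in X^X$ with $\rho(g,f)<\delta'$ is $(\epsilon/2)$-shadowed by an $f$-orbit. It suffices to produce $\delta>0$ with the following feature: every $\delta$-pseudo-orbit $\seq{x_i}$ for $f$ admits a sequence $\seq{x_i'}$ such that $d(x_i,x_i')<\epsilon/2$ for all $i$, such that $\seq{x_i'}$ is a $\gamma$-pseudo-orbit for some $\gamma<\delta'$, and such that $x_i'=x_j'$ implies $x_{i+1}'=x_{j+1}'$. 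Indeed, given such a $\seq{x_i'}$, Lemma \ref{functionally generated} supplies $g\in X^X$ with $\rho(g,f)\le\gamma<\delta'$ having $\seq{x_i'}$ as an orbit; by the choice of $\delta'$ there is $z\in X$ with $d(x_i',f^i(z))<\epsilon/2$ for all $i$, whence $d(x_i,f^i(z))<\epsilon$ for all $i$, i.e. $(X,f)$ has shadowing.

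The construction of $\seq{x_i'}$ is the crux, and the sole obstruction to invoking Lemma \ref{functionally generated} directly is a return of the pseudo-orbit to a point with two different successors; I would treat such a return according to whether the point is isolated. Two compactness observations guide the choice of $\delta$: for any threshold $\eta>0$ the set of $p\in X$ with $B_\eta(p)=\{p\}$ is finite (total boundedness), and for any $\epsilon^\ast>0$ one may choose $\eta>0$ so small that every \emph{isolated} point $p$ with $B_\eta(p)\neq\{p\}$ lies within $\epsilon^\ast$ of the set $X'$ of accumulation points of $X$ (otherwise a sequence of such $p$ together with nearby distinct points would converge to a point that is an accumulation point of $X$ yet at distance $\ge\epsilon^\ast$ from $X'$, which is absurd). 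Fix $\eta$ this way with $\epsilon^\ast$ a small fraction of $\epsilon$. For each of the finitely many $p$ with $B_\eta(p)=\{p\}$, Lemma \ref{periodic or not}, applied with its parameter $\delta'$ set to a small fraction of $\epsilon$, yields $\gamma_p>0$ such that in any $\gamma_p$-pseudo-orbit either $p$ occurs at most once, or $p$ is periodic and the pseudo-orbit tracks the genuine orbit of $p$ to within that fraction of $\epsilon$ from the moment it first meets that orbit; in the latter case one overwrites the offending tail by the exact periodic orbit, which is functionally generated and an actual orbit, and by minimality of that first meeting time no new conflict is introduced with the earlier terms. After this step the remaining conflicts occur only at non-isolated points or at isolated points lying near $X'$, where there is room to nudge a repeated point — or its finitely many repeated successors — onto a common fresh value without disturbing the pseudo-orbit estimate by more than a controlled amount. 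To keep this process finite I would first reduce, using compactness of $X$ and continuity of the iterates $f^i$, to a \emph{finite} pseudo-orbit that fails to be shadowed: if $(X,f)$ lacked $\epsilon$-shadowing, then for $\epsilon'<\epsilon$ the nested closed sets of candidate shadowing points would have empty intersection, so some finite $\delta$-pseudo-orbit would already fail to be $\epsilon'$-shadowed. Finally one takes $\delta$ below the minimum of the finitely many $\gamma_p$, below $\eta/2$, and small enough (using uniform continuity of $f$, available since $f\in\mathcal C(X)$ and $X$ is compact) that the accumulated perturbations keep $\seq{x_i'}$ a $\gamma$-pseudo-orbit with $\gamma<\delta'$ and keep $d(x_i,x_i')<\epsilon/2$.

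I expect the main obstacle to be exactly the bookkeeping in this last construction: reconciling the rigidity of the finitely many $\eta$-isolated points (handled through Lemma \ref{periodic or not}) with the global combinatorics of returns in the pseudo-orbit, and verifying that once those points are dealt with, the remaining conflicts can genuinely be removed by perturbations small enough to respect both the pseudo-orbit tolerance $\delta'$ and the displacement tolerance $\epsilon/2$, all under a single choice of $\delta$ meeting these finitely many constraints. The statement and proof of Lemma \ref{periodic or not} already signal that the isolated-point case is where the real work lies.
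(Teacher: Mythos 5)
Your proposal is correct and follows essentially the same route as the paper: the forward direction via the observation that orbits of nearby functions are pseudo-orbits, and the converse by perturbing a given pseudo-orbit into one satisfying the hypotheses of Lemma \ref{functionally generated} (so that it is realized as an orbit of a nearby $g\in X^X$), treating repeated visits to the finitely many ``rigid'' isolated points via Lemma \ref{periodic or not} and repeated visits elsewhere by nudging to fresh nearby points. The only detail to patch is your appeal to minimality of the first meeting time: after overwriting the tail by the true periodic orbit one must also steer the perturbed \emph{earlier} terms away from that orbit (as the paper's proof does explicitly), since a nudge could otherwise create a new conflict; also, your reduction to finite pseudo-orbits is valid but unnecessary here, because Lemma \ref{functionally generated} and the inductive choice of fresh points work directly for infinite sequences.
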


\begin{proof}
	As with Theorem \ref{shadowing in sequences}, if $(X,f)$ has shadowing, then the conclusion follows easily.
	
	So, let us now assume that $(X,f)$ has the indicated property and fix $\epsilon>0$. Choose $\epsilon/2>\delta>0$ to witness that if $g
	\in X^X$ with $\rho(f,g)<\delta$, then each $g$-orbit is $\epsilon/2$ shadowed by an $f$-orbit. Finally, choose $\delta>\gamma>0$.

	Now, by uniform continuity of $f$, fix $\gamma/3>\beta$ such that if $d(a,b)<\beta$, then $d(f(a),f(b))<\gamma/3$, and define
	\[S=\{x\in X:B_\beta(x) \textrm{ is infinite}\}.\]
	
	Observe that $X\setminus S$ is finite---otherwise, it would have a limit point $p$ and a point $q\in X\setminus S$ with $p\in B_\beta(q)$. But then $B_\beta(q)$ is infinite, a contradiction. For each $p\in X\setminus S$, use Lemma \ref{periodic or not} to choose $\gamma_p>0$ such that either (1) $|\{i\in\omega:p=x_i\}|\leq 1$ or (2) $p$ is periodic and there exists $M\in\omega$ with $x_M$ in the orbit of $p$ and for any such $M$, $\sup \{d({x_{M+i}},{f^i(x_M)})\}<\gamma/3$.

	Now, fix $\delta_0$ to be the lesser of $\beta$ and $\min\{\gamma_p:p\in X\setminus S\}$. We claim that every $\delta_0$-pseudo-orbit is $\epsilon$-shadowed by an orbit for $f$.
	
	In order to verify this, let $\seq{x_i}$ be a $\delta_0$-pseudo-orbit for $f$. We will now define a nearby $\gamma$-pseudo-orbit $\seq{y_i}$ for which $y_i=y_j$ implies $y_{i+1}=y_{j+1}$ to which we will apply Lemma \ref{functionally generated} to achieve our claimed results.
	
	There are two cases to consider.
	
	\textbf{Case 1:} $\seq{x_i}$ has the property that if $i<j$ with $x_i= x_j$, then $x_i\in S$. In this case, we proceed inductively, defining $y_0=x_0$ and, once $y_i$ is defined for all $i<n$, we define $y_n=x_n$ if $x_n\notin S$ and choose $y_n\in B_\beta(x_n)\cap S\setminus\{y_i:i<n\}$ otherwise. Clearly, $\sup\{d({y_i},{x_i})\}\leq\beta$ and for $i\in\omega$,
	\begin{align*}
		\d(f(y_i),y_{i+1})&\leq d(f(y_i),f(x_i))+d(f(x_i),x_{i+1})+d(x_{i+1},y_{i+1})\\
		&<\gamma/3+\delta_0+\beta<\gamma.
	\end{align*}
	i.e., $\seq{y_i}$ is a $\gamma$-pseudo-orbit. Furthermore, by construction if $i\neq j$, then $y_i\neq y_j$.

	\textbf{Case 2:} There exists $p\in X\setminus S$ and $m<n$ with $x_n=x_m=p$. Choose $M$ minimal so that $x_M$ is in the orbit of some point $p\notin S$ and there exists $M\leq m<n$ with $x_{m}=x_n=p$. By choice of $\delta_0$ and the existence of $m,n$ we are in case (2) of Lemma \ref{periodic or not}, and so $x_M$ is periodic and $\sup\{d({x_{M+i}},f^i(x_M))\}<\gamma/3$.

	For $i<M$, we choose $y_i$ as in Case 1, with the additional condition that $y_i$ not be in the orbit of $x_M$.

	For $i\geq M$, define $y_i=f^{i-M}(x_M)$. It is then clear that $\seq{y_i}$ is a $\gamma$-pseudo-orbit (by the same argument above) and that $\sup\{d({x_i},{y_i})\}\leq\max\{\beta,\gamma/3\}=\gamma/3$. By construction we see that the hypotheses of Lemma \ref{functionally generated} are met.
	
	In either case, we have a $\gamma$-pseudo-orbit $\seq{y_i}$ satisfying the hypotheses of Lemma \ref{functionally generated} such that $\sup\{d({x_i},{y_i})\}\leq\gamma/3$.

	By applying Lemma \ref{functionally generated}, we can find a function $g:X\to X$ with $\rho(g_N,f)\leq\gamma<\delta$ and $z\in X$ such that for all $i\in\omega$, $g^i(z)=y_i$.
	
	By our choice of $\delta$, there exists $q\in X$ such that the $f$-orbit of $q$ $\epsilon/2$-shadows the $g$-orbit of $z$.  Thus, for $i\in\omega$, we have
	\[d(f^i(q),x_i)\leq d(f^i(q),g^i(z))+d(g^i(z),y_i)+d(y_i,x_i)<\epsilon/2+0+\gamma/3<\epsilon.\]
	
	In other words, the $\delta_0$-pseudo-orbit $\seq{x_i}$ is $\epsilon$-shadowed by the $f$-orbit of $q$. Thus $f$ has shadowing as claimed.
\end{proof}

\section{Shadowing as a structural property in  $\mathcal C(X)$} \label{C(X)}

In this section, we develop results analogous to those of Section \ref{X^X} in the space of \emph{continuous} dynamical systems. It is worth highlighting that the results of that section depended heavily on freedom with which functions can be modified if continuity is not under consideration. However, modifying continuous functions appropriately is far more delicate. Broadly, if the domain under consideration allows for continuous functions to be perturbed with enough freedom, then we can demonstrate that shadowing is a structural property of the space of continuous dynamical systems.

We begin by defining the notion of perturbability.

\begin{definition}
	A metric space $X$ is \emph{perturbable} provided that for all $\epsilon>0$, there exists $\delta>0$ such that for every continuous function $f:X\to X$, every finite subset $F=\{x_i: i\leq N\}\subseteq X$, and every function $g_0:F\to X$ with $\rho(g_0,f|_{F})<\delta$, there exists a continuous function $g:X\to X$ with $\rho(f,g)<\epsilon$ and $g|_F=g_0$.
\end{definition}

It is worth noting that the collection of perturbable spaces is quite large---totally disconnected spaces are perturbable, for example. Spaces with a degree of generalized local convexity are also perturbable. 

\begin{definition}[\cite{Horvath}]
	A \emph{c-structure} on a topological space $X$ is a function $F$ from the set of nonempty finite subsets of $X$ into the collection of nonempty contractible subspaces of $X$ such that if $A\subseteq B$, then $F(A)\subseteq F(B)$.
	
	A nonempty subset $E$ of $X$ is an \emph{$F$-set} provided that $F(A)\subseteq E$ for all finite nonempty $A\subseteq E$.
	
	A space $X$ with c-structure $F$ is an \emph{m.c.-space} provided that each point has a neighborhood base consisting of $F$-sets.
\end{definition}

In the above definition, the function $F$ essentially plays the role in an m.c.-space that the convex hull plays in a locally convex space. Spaces which are m.c.-spaces include manifolds, topological graphs, and many other classes of continua.

The following extension theorem for m.c.-spaces is a generalization of Dugundji's extension theorem \cite{Dugundji} and  will be useful going forward.

\begin{theorem}[\cite{Horvath}]
	Let $X$ be a metric space, $A\subseteq X$ closed, $Y$ a m.c.-space with c-structure $F$ and $f:A\to Y$ continuous. Then there exists a continuous function $g:X\to Y$ which extends $f$ such that $g(X)\subseteq E$ for any $F$-set $E$ in $Y$ which contains $f(A)$.
\end{theorem}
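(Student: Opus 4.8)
The plan is to adapt Dugundji's classical extension construction, replacing convex combinations by the $c$-structure $F$ and assembling the extension simplex-by-simplex over the nerve of a suitable cover. Since $g$ is forced to equal $f$ on $A$, all the work takes place on the open set $U=X\setminus A$, which, being metrizable, is paracompact. For each $x\in U$ put $r_x=\tfrac13 d(x,A)$, take the open cover $\{B(x,r_x):x\in U\}$ of $U$, pass to a locally finite open refinement $\mathcal U=\{U_j\}_{j\in J}$ with a subordinate partition of unity $\{\phi_j\}_{j\in J}$, and for each $j$ fix $x_j\in U_j$ together with a point $a_j\in A$ with $d(x_j,a_j)\le 2\,d(x_j,A)$. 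Local finiteness means every point of $U$ has a neighborhood meeting only finitely many of the $U_j$.

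Next I would build the map. The partition of unity gives a continuous map $\kappa:U\to|\mathcal N|$, $\kappa(x)=\sum_j\phi_j(x)e_j$, into the geometric realization of the nerve $\mathcal N$ of $\mathcal U$. I then define a continuous $\Phi:|\mathcal N|\to Y$ by induction on skeleta so that $\Phi(e_j)=f(a_j)$ and, for every simplex $\sigma=\{j_0,\dots,j_k\}\in\mathcal N$, $\Phi$ maps the closed simplex $|\sigma|$ into $F_\sigma:=F(\{f(a_{j_0}),\dots,f(a_{j_k})\})$. On the $0$-skeleton this is just the vertex assignment. Given $\Phi$ on the $(k-1)$-skeleton and a $k$-simplex $\sigma$, monotonicity of $F$ places the image of $\partial|\sigma|\cong S^{k-1}$ inside $F_\sigma$; as $F_\sigma$ is contractible, this map is nullhomotopic and so extends over $|\sigma|\cong D^k$ within $F_\sigma$. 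These extensions agree on shared faces and glue to a continuous $\Phi$. Defining $g|_U=\Phi\circ\kappa$ and $g|_A=f$ yields a function $g:X\to Y$ extending $f$, continuous on $U$ by local finiteness.

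The delicate step — the one I expect to require the most care — is continuity of $g$ at a point $a\in A$. Given a neighborhood $V$ of $f(a)$, use the m.c.-space axiom to choose an $F$-set $E$ with $f(a)\in\operatorname{int}E\subseteq E\subseteq V$. Elementary estimates from the choices $r_x=\tfrac13 d(x,A)$ and $d(x_j,a_j)\le 2d(x_j,A)$ show that for $x\in U$ sufficiently close to $a$, every index $j$ with $\phi_j(x)>0$ has $d(a_j,a)$ small, hence $f(a_j)\in\operatorname{int}E$ by continuity of $f$ at $a$; moreover only finitely many such $j$ occur near $a$. Since $\kappa(x)$ lies in a simplex $\sigma$ whose vertices $j$ all satisfy $\phi_j(x)>0$, we get $g(x)=\Phi(\kappa(x))\in F_\sigma\subseteq E\subseteq V$, the middle inclusion because $E$ is an $F$-set containing the finite set $\{f(a_j):j\in\sigma\}$. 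Shrinking $V$ over a base at $f(a)$ gives continuity, with $g(a)=f(a)$. Finally, the $F$-set conclusion drops out of the construction: if $E$ is an $F$-set with $f(A)\subseteq E$, then each $f(a_j)\in E$, so $\Phi(|\sigma|)\subseteq F_\sigma\subseteq E$ for every $\sigma$, whence $g(U)=\Phi(\kappa(U))\subseteq E$ and $g(A)=f(A)\subseteq E$. The only external inputs are paracompactness of $U$ with its locally finite partition of unity (as in Dugundji \cite{Dugundji}) and the m.c.-space axioms supplying the contractible sets $F_\sigma$ and the $F$-set neighborhood base.
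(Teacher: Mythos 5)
The paper does not prove this theorem---it is imported from \cite{Horvath}---so there is no in-paper argument to compare against; your Dugundji-style construction (cover of $X\setminus A$ by balls of radius $\tfrac13 d(x,A)$, locally finite partition of unity, nerve map, skeletal extension into the sets $F_\sigma$) is precisely Horvath's route, and the continuity-at-$A$ argument via the $F$-set neighborhood base is the right mechanism. The deferred Dugundji estimates do work out (one checks $d(a_j,a)\le 6\,d(x,a)$ whenever $\phi_j(x)>0$), and your parenthetical that ``only finitely many such $j$ occur near $a$'' is neither true in general (the $U_j$ may accumulate on $A$) nor needed: for each fixed $x$ the support $\{j:\phi_j(x)>0\}$ is finite, which is all the argument uses.

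There is, however, one step that fails as written: the base of the skeletal induction. You set $\Phi(e_j)=f(a_j)$ and then invoke monotonicity of $F$ to place $\Phi(\partial|\sigma|)$ inside $F_\sigma$. For a $1$-simplex $\sigma=\{j_0,j_1\}$ this requires $f(a_{j_i})\in F(\{f(a_{j_i})\})\subseteq F_\sigma$, but the definition of a c-structure (here and in \cite{Horvath}) does not require $y\in F(\{y\})$: the set $F(A)$ is merely nonempty, contractible, and monotone in $A$, and need not contain $A$. The repair is standard and costless: take $\Phi(e_j)$ to be an arbitrary point of $F(\{f(a_j)\})$ rather than $f(a_j)$ itself. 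The induction then runs, and every downstream use of the construction---the inclusion $g(x)\in F_\sigma\subseteq E$ for an $F$-set $E$ containing the relevant points $f(a_j)$, invoked both for continuity at points of $A$ and for the final conclusion $g(X)\subseteq E$---uses only that $\Phi(|\sigma|)\subseteq F_\sigma$ and that $f(a_j)\in E$, never the literal vertex values, so nothing else changes.
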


In particular, we apply this theorem to establish the following result.

\begin{proposition} \label{lc is lp}
	If a compact metric space $X$ is totally disconnected or has a c-structure which makes it an m.c.-space, then $X$ is perturbable.
\end{proposition}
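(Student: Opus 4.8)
The plan is to split the proof into the two cases named in the hypothesis—$X$ totally disconnected, and $X$ an m.c.-space—and verify the perturbability condition directly in each case, using in the second case the extension theorem quoted immediately above. Fix $\epsilon>0$; we must produce $\delta>0$ working uniformly over all continuous $f$, all finite $F=\{x_i:i\le N\}$, and all $g_0:F\to X$ with $\rho(g_0,f|_F)<\delta$.

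For the totally disconnected case, I would use compactness to get a finite partition of $X$ into clopen sets each of diameter less than $\epsilon$; call this partition $\mathcal U$, and let $\delta$ be a Lebesgue-type number so that any two points within $\delta$ lie in a common member of $\mathcal U$ (more precisely, since the partition is finite and clopen, pick $\delta$ smaller than the minimum distance between distinct partition elements, so $d(a,b)<\delta$ forces $a,b$ to lie in the same cell). Given $f$, $F$, and $g_0$ with $\rho(g_0,f|_F)<\delta$, define $g$ as follows: on each cell $U\in\mathcal U$ that contains no point of $F$, set $g\equiv f$ restricted appropriately—actually it is cleaner to first choose, for each cell $U$, the cell $V_U$ of the partition containing $f(x)$ for a representative (this requires $f$ to be ``nearly constant'' on cells, which we can arrange by refining the partition using uniform continuity of $f$—but $f$ varies, so instead I would make $g$ locally constant). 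The cleanest construction: refine $\mathcal U$ is not allowed to depend on $f$; so instead, for each $x_i\in F$, let $U_i$ be the cell containing $x_i$ (shrinking the $U_i$ to be pairwise disjoint singleton-separating clopen sets is automatic since cells are disjoint), and define $g(y)=g_0(x_i)$ for $y\in U_i$ and $g(y)=f(y)$ for $y$ outside $\bigcup U_i$. Then $g$ is continuous (each piece is continuous on a clopen set), $g|_F=g_0$, and for $y\in U_i$ we have $d(g(y),f(y))=d(g_0(x_i),f(y))\le d(g_0(x_i),f(x_i))+d(f(x_i),f(y))<\delta+\operatorname{diam}f(U_i)$. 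To control $\operatorname{diam}f(U_i)$ uniformly I would, before fixing anything, use that we only need it $<\epsilon$: this is where I must be careful, since $f$ is arbitrary. The fix is to interleave the order of quantifiers differently—choose the clopen partition fine enough that each cell has diameter $<\epsilon/2$, note $f$ is continuous hence for the \emph{given} $f$ uniformly continuous, so there is a common refinement cell size making $\operatorname{diam}f(U_i)<\epsilon/2$; but that refinement depends on $f$. Since $g_0$ is only defined on the finite set $F$, I can simply shrink each $U_i$ to a clopen neighborhood of $x_i$ with $f(U_i)$ of diameter $<\epsilon/2$ (possible by continuity of the fixed $f$ at $x_i$), keeping them pairwise disjoint and disjoint from the other points of $F$; the partition-independent choice is just $\delta=\epsilon/2$. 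This works: $g$ agrees with $g_0$ on $F$, is continuous, and $\rho(f,g)<\epsilon$.

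For the m.c.-space case, I would let $F_{\mathrm{str}}$ denote the c-structure and use the local $F$-set neighborhood base together with compactness: given $\epsilon>0$, cover $X$ by finitely many $F$-sets $E_1,\dots,E_k$ each of diameter $<\epsilon$ and let $\delta$ be a Lebesgue number for this cover, so that every $\delta$-ball lies in some $E_j$. Given continuous $f$, finite $F$, and $g_0:F\to X$ with $\rho(g_0,f|_F)<\delta$: the set $A=F$ is closed, and $g_0:A\to X$ is trivially continuous on the finite set. I want to extend $g_0$ to a continuous $g:X\to X$ with $\rho(f,g)<\epsilon$. Here the naive application of the extension theorem does not immediately give the metric bound, so the idea is to work \emph{locally around each $x_i$}: by continuity of $f$, pick pairwise disjoint closed neighborhoods $D_i$ of the $x_i$ with $f(D_i)\subseteq E_{j(i)}$ for a single $F$-set $E_{j(i)}$ containing $f(x_i)$, and small enough that $\operatorname{diam} E_{j(i)}<\epsilon$; since $g_0(x_i)$ is within $\delta$ of $f(x_i)$, we also have $g_0(x_i)\in E_{j(i)}$. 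Now on the closed set $A'=\bigcup_i \partial D_i \cup \{x_i\}$ define a continuous map: $g_0$ at $x_i$ and $f$ on $\partial D_i$; extend this over each $D_i$ using the m.c.-extension theorem (applied with $X$ replaced by $D_i$, which is a metric space, $A=\partial D_i\cup\{x_i\}$ closed in $D_i$, target the m.c.-space $X$) with the constraint that the image lies in the $F$-set $E_{j(i)}$—legitimate because $f(\partial D_i)\cup\{g_0(x_i)\}\subseteq E_{j(i)}$. Outside $\bigcup D_i$, set $g=f$. The pieces agree on the overlaps $\partial D_i$ where both equal $f$, so $g$ is continuous; $g|_F=g_0$; and $\rho(f,g)<\epsilon$ because on $D_i$ both $f(D_i)$ and $g(D_i)$ lie in $E_{j(i)}$ of diameter $<\epsilon$, and elsewhere they coincide.

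The main obstacle, in both cases, is the uniformity of $\delta$: the definition of perturbability demands $\delta$ depend only on $\epsilon$, not on $f$, $F$, or $g_0$, yet the natural constructions want to shrink neighborhoods in a way that depends on the continuity modulus of $f$. The resolution above is that the shrinking is used only to bound $\operatorname{diam} f(\text{small nbhd})$ and $\operatorname{diam}(F\text{-set})$, both of which can be made $<\epsilon$ \emph{after} $f$ is revealed, while the genuine constraint linking $g_0$ to $f$ is $\rho(g_0,f|_F)<\delta$ with $\delta$ depending only on $\epsilon$ (via a Lebesgue number of an $\epsilon$-fine cover by $F$-sets, or simply $\delta=\epsilon/2$ in the totally disconnected case). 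I would write the two cases as two short lemmas or as two paragraphs, flagging that the totally disconnected case is essentially the m.c.-space argument with the trivial c-structure $F(\{a\})=\{a\}$ on clopen cells, so the second argument subsumes the first once one notices clopen cells of small diameter are $F$-sets for that structure.
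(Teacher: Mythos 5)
Your two constructions are correct and essentially identical to the paper's: in both cases one perturbs $f$ only on small pairwise disjoint neighborhoods of the points of $F$ (clopen cells on which $g$ is made constant in the totally disconnected case; closed neighborhoods $D_i$ with the Horvath extension theorem applied to $\partial D_i\cup\{x_i\}$ mapping into a small $F$-set in the m.c.\ case), and the quantifier issue is resolved exactly as in the paper---the neighborhoods may depend on the revealed $f$, while only the bound $\rho(g_0,f|_F)<\delta$ needs a $\delta$ depending on $\epsilon$ alone. Your closing aside that the m.c.\ argument subsumes the totally disconnected one is false, though: each $F(A)$ must be contractible, hence connected, hence a singleton in a totally disconnected space, and monotonicity then forces $F$ to be a constant singleton, so no totally disconnected space with more than one point admits a c-structure making it an m.c.-space; the two cases genuinely require the separate arguments that both you and the paper give.
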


\begin{proof}
	Fix $X$, $\epsilon>0$ and $F\subseteq X$ finite. 
	
	First, suppose that $X$ has c-structure $F$ which makes it an m.c.-space.
	Since $X$ is compact and metric, we can choose $\delta<\epsilon/2$ such that for every point $x\in X$, the set $B_\delta(x)$ is an $F$-set
	
	Now, fix $f:X\to X$ continuous and choose $g_0:F\to X$ a function with $\rho(g_0,f|F)<\delta$. 	Since $f$ is continuous on the compact metric space $X$, choose $\eta>0$ such that if $d(a,b)<\eta$, then $d(f(a),f(b))<\delta$ and such that if $p,p'\in F$ with $p\neq p'$, $d(p,p')>\eta$. 
	
	Now, for each $p\in F$, let $X_p=\{x\in X:d(x,p)\leq\eta/2\}$ and let $C_p=\partial X_p\cup\{p\}$. Define $g_p:C_p\to X$ by
	\[h_p(x)=\begin{cases} 
	f(x) & x\in\partial X_p \\
	g_0(p) & x=p
	\end{cases}.\]
	
	By construction, for each $x\in C_p$, $d(x,p)<\eta$ and hence $d(f(x),f(p))<\delta$. Since $g_0(p)$ is also within $\delta$ of $f(p)$, we see that $h_p(C_p)\subseteq B_\delta(f(p))$. Since the latter is convex, and $C_p$ is a closed subset of $X_p$, by Theorem 2 of \cite{Horvath}, since $B_\delta(f(p))$ is an $F$-set containing $g_p(C_p)$, there is a continuous extension $g_p:X_p\to X$ of $h_p$ with $g_p(X_p)$ a subset $B_\delta(f(p))$.
	
	By choice of $\eta$, $\{X_p:p\in F\}$ is a collection of pairwise disjoint closed sets, and thus we can define the desired function $g:X\to X$ as follows
	\[g(x)=\begin{cases}
	g_p(x): x\in X_p\\
	f(x): x\notin \bigcup X_p.
	\end{cases}\]
	
	Clearly, for $p\in F$, $g(p)=g_p(p)=h_p(p)=g_0(p)$. To see that $\rho(g,f)<\epsilon$, observe that if $x\notin\bigcup X_p$, then $f(x)=g(x)$, and if $x\in X_p$, then $d(x,p)\leq\eta/2$ and hence $d(f(x), f(p))<\delta$. But $g(x)=g_p(x)$ is contained in $B_\delta(f(p))$ by construction, and hence $d(f(x),g(x))<2\delta<\epsilon$.
	
	If $X$ is instead totally disconnected, we proceed as follows. First, choose $\delta<\epsilon$.  
	
	Now, fix $f:X\to X$ and $g_0:F\to X$ with $\rho(g_0,f|F)<\delta$. By uniform continuity, choose $\eta>0$ such that if $d(a,b)<\eta$, then $d(f(a),f(b))<\epsilon-\delta$. Since $X$ is compact and totally disconnected, for each $p\in F$, choose a neighborhood $X_p$ of $p$ with diameter less than $\eta$ and with empty boundary and such that $\{X_p:p\in F\}$ is a collection of pairwise disjoint sets. Now, we define the desired function $g:X\to X$ by:
	
	\[g(x)=\begin{cases}
	g_0(x): x\in X_p\\
	f(x): x\notin \bigcup X_p.
	\end{cases}\]
	
	By construction, $g$ is continuous, maps $p$ to $g_0(p)$ for all $p\in F$ and for $x\notin\bigcup X_p$, $d(f(x),g(x))=0<\epsilon$. For $x\in X_p$, we see that $d(x,p)<\eta$ and hence $d((f(x),f(p))<\epsilon-\delta$. Since $d(f(p),g(p))=d(f(p),g_0(p))<\delta$, it follows that $d(f(x),g(x))<\epsilon$, and thus we see $\rho(f,g)<\epsilon$.
\end{proof}

For the purposes of establishing analogs to Theorem \ref{shadowing in sequences} and Corollary \ref{shadowing in sequences corollary}, a weaker form of perturbability is sufficient.

\begin{definition}
	A metric space $X$ is \emph{weakly perturbable} provided that for all $\epsilon>0$, there exists $\delta>0$ such that for every continuous function $f:X\to X$ and every pair $(x,y)\in X\times X$ with $d(y,f(x))<\delta$, there exists a continuous function $g:X\to X$ with $\rho(f,g)<\epsilon$ and $g(x)=y$.
\end{definition}

It is immediately clear that every perturbable space is weakly perturbable. It is not difficult to see that the converse statement is false---the product of the interval and the Cantor set is weakly perturbable but not perturbable.

\begin{lemma} \label{continuous}
	Suppose $X$ is weakly perturbable. Let $\seq{f_i}\in \mathcal C(X)^\omega$ and $\epsilon>0$. Then there exists $\delta>0$ such that if $\seq{x_i}$ is a $\delta$-pseudo-orbit for $\seq{f_i}$, then there exists $\seq{g_i}\in \mathcal C(X)^\omega$  such that $\rho(\seq{f_i},\seq{g_i})\leq\epsilon$ and $\seq{x_i}$ is an orbit for $\seq{g_i}$ .
\end{lemma}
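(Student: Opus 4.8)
The plan is to apply the definition of weak perturbability coordinatewise. Given $\epsilon>0$, let $\delta>0$ be the constant provided by weak perturbability of $X$ for this $\epsilon$; recall that this $\delta$ works uniformly for every continuous function $f:X\to X$ and every pair $(x,y)\in X\times X$ with $d(y,f(x))<\delta$, producing a continuous $g$ with $\rho(f,g)<\epsilon$ and $g(x)=y$. I claim this same $\delta$ witnesses the conclusion of the lemma.

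So suppose $\seq{x_i}$ is a $\delta$-pseudo-orbit for $\seq{f_i}$, meaning $d(f_i(x_i),x_{i+1})<\delta$ for every $i\in\omega$. Fix $i$. Since $f_i$ is continuous and $d(x_{i+1},f_i(x_i))<\delta$, weak perturbability yields a continuous $g_i:X\to X$ with $\rho(f_i,g_i)<\epsilon$ and $g_i(x_i)=x_{i+1}$. Performing this construction for each $i$ produces $\seq{g_i}\in\mathcal C(X)^\omega$ with $\rho(\seq{f_i},\seq{g_i})=\sup_{i\in\omega}\rho(f_i,g_i)\leq\epsilon$ (a supremum of strict inequalities need only be non-strict, which is exactly why the statement asks for $\leq\epsilon$).

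It remains to check that $\seq{x_i}$ is the orbit of $x_0$ under $\seq{g_i}$, which is a routine induction: $g^0_0(x_0)=x_0$, and if $g^i_0(x_0)=x_i$ then $g^{i+1}_0(x_0)=g_i(g^i_0(x_0))=g_i(x_i)=x_{i+1}$. Hence $g^i_0(x_0)=x_i$ for all $i\in\omega$, as desired.

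There is no genuine obstacle here beyond the bookkeeping point that the constant $\delta$ must be extracted from weak perturbability \emph{before} the pseudo-orbit is given, and that it is genuinely uniform over the functions $f_i$ — which it is, by definition — so that a single $\delta$ handles every coordinate simultaneously. This coordinatewise freedom is precisely what is unavailable for autonomous systems, where the more delicate arguments of Section \ref{X^X} were required.
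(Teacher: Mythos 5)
Your proof is correct and follows essentially the same route as the paper's: extract the uniform $\delta$ from weak perturbability once, apply it coordinatewise to each pair $(x_i, x_{i+1})$ to obtain $g_i$ with $g_i(x_i)=x_{i+1}$ and $\rho(f_i,g_i)<\epsilon$, and observe that the supremum over $i$ need only satisfy $\leq\epsilon$. Your explicit induction verifying that $\seq{x_i}$ is the $\seq{g_i}$-orbit of $x_0$ is a detail the paper leaves as ``immediate,'' but the argument is the same.
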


\begin{proof}
Fix $\epsilon>0$ and choose $\delta>0$ to witness the weak perturbability of $X$. Let $\seq{x_i}$ be a $\delta$-pseudo-orbit for $\seq{f_i}$.

For each $i\in\omega$, notice that $d(f_i(x_i),x_{i+1})<\delta$, and thus we can apply weak perturbability to choose a function $g_i\in\mathcal C(X)$ with $g_i(x_i)=x_{i+1}$ and $\rho(g_i,f_i)<\epsilon$. It is then immediate that $\seq{x_i}$ is an orbit for $\seq{g_i}$ and $\rho(\seq{f_i},\seq{g_i})\leq\epsilon$.
\end{proof}

With this result, Theorem \ref{sequence continuous} and Corollary \ref{single sequence continuous} follow by the same argumentation as Theorem \ref{shadowing in sequences} and Corollary \ref{shadowing in sequences corollary}, respectively.

\begin{theorem} \label{sequence continuous}
	Let $X$ be a space which is weakly perturbable and let $\seq{f_i}\in\mathcal C(X)^\omega$. The dynamical system $(\seq{f_i},X)$ has shadowing if and only if for all $\epsilon>0$, there exists $\delta>0$ such that if $\seq{g_i}\in\mathcal C(X)^\omega$ with $\rho(\seq{g_i},\seq{f_i})<\delta$, then each $\seq{g_i}$-orbit  is $\epsilon$-shadowed by an $\seq{f_i}$-orbit.
\end{theorem}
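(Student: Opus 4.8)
The plan is to mirror the proof of Theorem \ref{shadowing in sequences}, substituting Lemma \ref{continuous} for Lemma \ref{discontinuous} to absorb the loss of precision that is unavoidable when one perturbs continuous maps rather than arbitrary functions. The forward implication needs nothing new: assuming $(X,\seq{f_i})$ has shadowing, I would fix $\epsilon>0$, choose $\delta>0$ witnessing shadowing for $\epsilon$, and observe that if $\seq{g_i}\in\mathcal C(X)^\omega$ satisfies $\rho(\seq{g_i},\seq{f_i})<\delta$, then by Remark \ref{nearby} every $\seq{g_i}$-orbit is a $\delta$-pseudo-orbit for $\seq{f_i}$ and hence is $\epsilon$-shadowed by an $\seq{f_i}$-orbit. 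This uses neither weak perturbability nor continuity.

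For the converse, I would fix $\epsilon>0$ and use the hypothesized property to obtain $\delta_0>0$ such that $\rho(\seq{g_i},\seq{f_i})<\delta_0$ forces each $\seq{g_i}$-orbit to be $\epsilon$-shadowed by an $\seq{f_i}$-orbit. Then I would apply Lemma \ref{continuous} with tolerance $\delta_0/2$ to produce $\delta>0$ with the property that every $\delta$-pseudo-orbit for $\seq{f_i}$ is an orbit of some $\seq{g_i}\in\mathcal C(X)^\omega$ satisfying $\rho(\seq{f_i},\seq{g_i})\le\delta_0/2<\delta_0$. Given any $\delta$-pseudo-orbit $\seq{x_i}$ for $\seq{f_i}$, I realize it as a $\seq{g_i}$-orbit in this way; since $\rho(\seq{g_i},\seq{f_i})<\delta_0$, that orbit, and therefore $\seq{x_i}$, is $\epsilon$-shadowed by an $\seq{f_i}$-orbit. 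Hence $\delta$ witnesses shadowing for $\epsilon$, and $(X,\seq{f_i})$ has shadowing.

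The only step demanding attention — and the one place this argument genuinely departs from that of Theorem \ref{shadowing in sequences} — is the quantifier bookkeeping in the converse. Lemma \ref{discontinuous} realized a $\delta$-pseudo-orbit exactly as an orbit of a system lying within $\delta$, so a single halving of the tolerance sufficed there; Lemma \ref{continuous}, by contrast, only realizes a sufficiently precise pseudo-orbit as an orbit of a system lying within a prescribed tolerance. One must therefore run the perturbation at a scale strictly below $\delta_0$ (taking $\delta_0/2$ is convenient) so that the resulting $\seq{g_i}$ falls inside the $\delta_0$-neighborhood on which the hypothesis delivers $\epsilon$-shadowing. Beyond this, no new ideas are needed; weak perturbability enters the proof solely through Lemma \ref{continuous}.
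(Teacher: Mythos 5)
Your argument is correct and is precisely the one the paper intends: the forward direction via Remark \ref{nearby}, and the converse by realizing a sufficiently fine pseudo-orbit as a $\seq{g_i}$-orbit via Lemma \ref{continuous}, with the tolerance chosen strictly below $\delta_0$ so the perturbed system lands in the neighborhood where the hypothesis applies. The paper omits the details, stating only that the result "follows by the same argumentation" as Theorem \ref{shadowing in sequences}; your quantifier bookkeeping fills in exactly the adjustment that substitution requires.
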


\begin{corollary} \label{single sequence continuous}
	Let $X$ be a space which is weakly perturbable and let $f\in\mathcal C(X)$. The dynamical system $(X,{f})$ has shadowing if and only if for all $\epsilon>0$, there exists $\delta>0$ such that if $\seq{g_i}\in\mathcal C(X)^\omega$ with $\rho(\seq{g_i},\seq{f})<\delta$, then each $\seq{g_i}$-orbit  is $\epsilon$-shadowed by an ${f}$-orbit.
\end{corollary}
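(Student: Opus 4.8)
The proof of Corollary~\ref{single sequence continuous} is meant to be an immediate specialization of Theorem~\ref{sequence continuous}, exactly as Corollary~\ref{shadowing in sequences corollary} specializes Theorem~\ref{shadowing in sequences}. The plan is therefore to invoke the identification, noted in Section~\ref{Prelim}, of the autonomous system $(X,f)$ with the constant-sequence nonautonomous system $(X,\seq{f})\in\mathcal C(X)^\omega$. Under this identification the $f$-orbit of a point $z$ is exactly the $\seq{f}$-orbit of $z$, a $\delta$-pseudo-orbit for $f$ is exactly a $\delta$-pseudo-orbit for $\seq{f}$, and $(X,f)$ has shadowing if and only if $(X,\seq{f})$ does.

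Granting this, I would argue as follows. First suppose $(X,f)$ has shadowing; then so does $(X,\seq{f})$, and Theorem~\ref{sequence continuous} (applied to the weakly perturbable space $X$ and the sequence $\seq{f}$) gives, for each $\epsilon>0$, a $\delta>0$ such that every $\seq{g_i}\in\mathcal C(X)^\omega$ with $\rho(\seq{g_i},\seq{f})<\delta$ has all its orbits $\epsilon$-shadowed by an $\seq{f}$-orbit, i.e. by an $f$-orbit; this is precisely the stated condition. Conversely, suppose the stated condition holds. It asserts exactly that for the sequence $\seq{f}$, every nearby $\seq{g_i}\in\mathcal C(X)^\omega$ has its orbits $\epsilon$-shadowed by an $\seq{f}$-orbit, which is the right-hand side of the equivalence in Theorem~\ref{sequence continuous} for $\seq{f_i}=\seq{f}$. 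Hence by that theorem $(X,\seq{f})$ has shadowing, and therefore $(X,f)$ has shadowing.

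The only point requiring any care — and it is genuinely routine rather than a real obstacle — is checking that $\rho(\seq{g_i},\seq{f})<\delta$ quantifies over the \emph{same} collection of perturbations in both statements: in the corollary $\seq{g_i}$ ranges over all of $\mathcal C(X)^\omega$, and $\rho(\seq{g_i},\seq{f})=\sup_i\rho(g_i,f)$, which matches the metric used in Theorem~\ref{sequence continuous} with $\seq{f_i}$ the constant sequence. Once this is observed the two conditions are literally the same statement, so no further work is needed. I would write the proof in a single short paragraph, simply citing Theorem~\ref{sequence continuous} together with the identification of $(X,f)$ and $(X,\seq{f})$, in parallel with the one-line deduction of Corollary~\ref{shadowing in sequences corollary} from Theorem~\ref{shadowing in sequences}.
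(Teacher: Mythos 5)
Your proposal is correct and is exactly the argument the paper intends: the paper states that Corollary \ref{single sequence continuous} follows from Theorem \ref{sequence continuous} by the same reasoning that derives Corollary \ref{shadowing in sequences corollary} from Theorem \ref{shadowing in sequences}, namely identifying $(X,f)$ with the constant sequence $(X,\seq{f})$. Your observation that the metrics and orbit notions match under this identification is the only point needing verification, and you handle it correctly.
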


In order to prove an analog for Theorem \ref{shadowing in X^X}, it would be desirable to have an analog for Lemma \ref{functionally generated}, i.e. the ability to generate a pseudo-orbit as an orbit of a nearby function. In the previous section, this was easily achievable, as continuity was not a concern. However, for this result, we would want the nearby function to be continuous. In general, this would likely require being able to perturb a function on an \emph{infinite} set, and (since we are requiring continuity) hence also on any limit points. However, in the compact setting, we will see that being able to perturb a function on finitely many points is sufficient.

\begin{lemma} \label{finite approx}
	Suppose $X$ is perturbable . Let $f\in\mathcal C(X)$ and $\delta>0$. Then there exists $\gamma>0$ such that if $\seq{x_i}$ a $\gamma$-pseudo-orbit for $f$ for which $x_i=x_j$ implies that $x_{i+1}=x_{j+1}$, then for every natural number $N$, there exists $g\in\mathcal C(X)$ with $\rho(g,f)<\delta$ and $z\in X$  such that for all $i\leq N$, $g^i(z)=x_i$.
\end{lemma}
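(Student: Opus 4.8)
\emph{Proof proposal.} The plan is to follow the same template as Lemma \ref{functionally generated}, but replacing the ad hoc ``redefine $f$ along the pseudo-orbit'' construction with an appeal to perturbability. The one genuine cost of insisting on continuity is that perturbability only lets us dictate the values of $g$ on a \emph{finite} set, which is exactly why the conclusion is restricted to the initial segment $x_0,\dots,x_N$ rather than the whole pseudo-orbit.

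First I would invoke the definition of perturbability with $\epsilon=\delta$, obtaining a constant $\delta_0>0$ such that any continuous $f$ may be altered on any finite subset of $X$ so as to agree there with any prescribed map that is $\delta_0$-close to $f|_F$, at the cost of only a $\delta$-perturbation in $\rho$. Then set $\gamma=\delta_0$. The point worth flagging is that this choice of $\gamma$ is independent of $N$: perturbability allows perturbation on finite sets of \emph{arbitrary} cardinality, so a single $\gamma$ works for all $N$ simultaneously, as the statement demands.

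Next, fix a $\gamma$-pseudo-orbit $\seq{x_i}$ with $x_i=x_j\implies x_{i+1}=x_{j+1}$, and fix $N\in\omega$. Let $F=\{x_i:i<N\}$ and define $g_0\colon F\to X$ by $g_0(x_i)=x_{i+1}$. The hypothesis on $\seq{x_i}$ is precisely what makes $g_0$ well defined. Since $\seq{x_i}$ is a $\gamma$-pseudo-orbit, $d(g_0(x_i),f(x_i))=d(x_{i+1},f(x_i))<\gamma=\delta_0$ for each $x_i\in F$, and as $F$ is finite this yields $\rho(g_0,f|_F)<\delta_0$. Perturbability then produces a continuous $g\colon X\to X$ with $\rho(g,f)<\delta$ and $g|_F=g_0$.

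Finally, take $z=x_0$. Then $g^0(z)=x_0$, and for $i<N$ one has $g^{i+1}(z)=g(g^i(z))=g(x_i)=g_0(x_i)=x_{i+1}$ by induction, so $g^i(z)=x_i$ for all $i\leq N$, which is what we want. I do not anticipate any real obstacle: the substantive content is the reduction from modifying $f$ along the entire (possibly infinite, with limit points) pseudo-orbit to modifying it only along an initial segment, and perturbability is designed for exactly that situation; the verification above is routine.
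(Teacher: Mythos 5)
Your proposal is correct and follows essentially the same route as the paper's proof: take $\gamma$ witnessing perturbability for $\epsilon=\delta$ (noting it is independent of $N$), set $F=\{x_i:i<N\}$, define $g_0(x_i)=x_{i+1}$ (well defined by the repetition hypothesis), and apply perturbability to get the continuous $g$ with $z=x_0$. You in fact spell out two details the paper leaves implicit — the verification that $\rho(g_0,f|_F)<\gamma$ and the induction giving $g^i(z)=x_i$ — so nothing is missing.
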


\begin{proof}
	Fix $\delta>0$ and let $\gamma>0$ witness the perturbability of $X$ and let $\seq{x_i}$ be a $\gamma$-pseudo-orbit for $f$ satisfying the hypotheses of the lemma. 
	
	Now, fix $N$ and define $F=\{x_i:i< N\}$ and $g_0:F\to X$ by $g_0(x_i)=x_{i+1}$. Note that $g_0$ is well-defined since if $x_i=x_j$, then $g_0(x_i)=x_{i+1}=x_{j+1}=g_0(y_i)$. By perturbability of $X$ and choice of $\gamma$, there exists a continuous $g_N:X\to X$ such that $\rho(g_N,f)<\delta$ and for $z=x_0$ we have $g^i(z)=x_i$ for $i\leq N$, as claimed.
\end{proof}

The following lemma is very much a natural analog to Lemma \ref{periodic or not} and the proof differs only in that it appeals to Lemma \ref{periodic or nonrepeating} rather than Lemma \ref{periodic or not}. We include a full proof for completeness.

\begin{lemma} \label{periodic or nonrepeating}
	Let $X$ be a perturbable compact metric space and let $f\in C(X)$ such that for every $\epsilon>0$ there exists $\delta>0$ such that if $g\in \mathcal C(X)$ with $\rho(g,f)<\delta$, then each $g$-orbit  is $\epsilon$-shadowed by an $f$-orbit. If $p\in X$ is isolated, then for every $\delta'$ there exists $\gamma_p>0$ such that if $\seq{x_i}$ is a $\gamma_p$-pseudo-orbit for $f$, then either
	\begin{enumerate}
		\item $|\{i\in\omega:p=x_i\}|\leq 1$ or
		\item $p$ is periodic and there exists $M\in\omega$ with $x_M$ in the orbit of $p$ and for any such $M$, $\sup\{d({x_{M+i}},{f^i(x_M)})\}<\delta'$.
	\end{enumerate}

\end{lemma}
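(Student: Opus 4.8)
The plan is to mirror the proof of Lemma~\ref{periodic or not} almost line for line, with Lemma~\ref{finite approx} playing the role that Lemma~\ref{functionally generated} played there; the hypothesis that $X$ is perturbable is precisely what legitimizes this substitution. Fix $\delta'>0$, let $p\in X$ be isolated, and pick $\epsilon>0$ with $B_\epsilon(p)=\{p\}$. If there is some $\gamma_p>0$ such that no $\gamma_p$-pseudo-orbit for $f$ contains $p$ twice, then alternative~(1) holds for every such pseudo-orbit and we are done; so assume that for every $\gamma>0$ there is a $\gamma$-pseudo-orbit in which $p$ occurs at least twice. Use the hypothesis of the lemma to fix $\delta>0$ so that $\rho(g,f)<\delta$ with $g\in\mathcal C(X)$ forces every $g$-orbit to be $\epsilon$-shadowed by an $f$-orbit, and then apply Lemma~\ref{finite approx} to $f$ and this $\delta$ to obtain the corresponding $\gamma>0$.

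First I would establish that $p$ is periodic. Choose a $\gamma$-pseudo-orbit with $p=x_M=x_{M+L}$ for $L>0$ minimal, and --- word for word as in Lemma~\ref{periodic or not} --- repeatedly delete repetitions from the block $x_M,\dots,x_{M+L-1}$ to produce a finite list $z_0,\dots,z_{P-1}$ with no repeats satisfying $z_0=p$, $d(f(z_{P-1}),z_0)<\gamma$, and $d(f(z_i),z_{i+1})<\gamma$ for $i<P-1$; extending by $z_i=z_{i\bmod P}$ gives a $\gamma$-pseudo-orbit that meets the functional hypothesis of Lemma~\ref{finite approx}. This is the one place where the continuous setting behaves differently: Lemma~\ref{finite approx} does not realize this periodic pseudo-orbit as an orbit of a single continuous map, only an initial segment of it of any prescribed length. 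Applying it with segment length $P$ produces $g\in\mathcal C(X)$ with $\rho(g,f)<\delta$ and $z=z_0=p$ satisfying $g^i(p)=z_i$ for all $i\le P$; in particular $g^0(p)=p$ and $g^P(p)=z_P=z_0=p$. The $g$-orbit of $p$ is then $\epsilon$-shadowed by the $f$-orbit of some $q\in X$, and because $B_\epsilon(p)=\{p\}$, comparing coordinates $0$ and $P$ gives $q=p$ and $f^P(p)=p$. Hence $p$ is periodic; let $P$ henceforth denote the period of $p$.

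From here the argument is identical to that of Lemma~\ref{periodic or not}. Using uniform continuity of $f,f^2,\dots,f^{P-1}$, choose $\gamma_p>0$ with $\gamma_p<\min\{\epsilon,\delta'/2\}/P$ and with $d(a,b)<\gamma_p$ implying $d(f^i(a),f^i(b))<\min\{\epsilon,\delta'/2\}/P$ for all $i<P$. For a $\gamma_p$-pseudo-orbit $\seq{x_i}$, if $p$ appears at most once we have alternative~(1); otherwise some $x_M$ lies in the orbit of $p$, and for any such $M$ I would pick $k<P$ with $f^k(x_M)=p$, run the telescoping estimate to get $d(x_{M+i},f^i(x_M))<\min\{\epsilon,\delta'/2\}$ for $i\le P$, conclude $x_{M+k}=p$ since $B_\epsilon(p)=\{p\}$, and then induct (restarting from $x_{M+k}=p$) to obtain $\sup_i d(x_{M+i},f^i(x_M))<\delta'$, i.e.\ alternative~(2).

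The only genuine obstacle is the verification flagged above: that finite-segment realization suffices to force periodicity of $p$. This works because $\epsilon$-shadowing of just two coordinates of the realizing orbit --- the initial one and the one a full cycle later --- already forces the shadowing point to equal $p$ and makes $p$ return to itself after one cycle, hence periodic. Everything else --- the repeat-deletion bookkeeping and the uniform-continuity telescoping estimate --- transfers verbatim from the proof of Lemma~\ref{periodic or not}.
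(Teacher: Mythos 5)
Your proposal is correct and follows essentially the same route as the paper's proof, which is itself a near-verbatim transcription of the proof of Lemma \ref{periodic or not} with Lemma \ref{finite approx} substituted for Lemma \ref{functionally generated}. In fact you are slightly more careful than the paper at the one delicate point: the paper asserts that $\seq{z_i}$ ``is a $g$-orbit,'' whereas Lemma \ref{finite approx} only realizes an initial segment of prescribed length; your observation that realizing the segment up to index $P$ and comparing coordinates $0$ and $P$ already forces $q=p$ and $f^P(p)=p$ is exactly the right repair.
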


\begin{proof}
	Fix $\delta'>0$. Let $p\in X$ be isolated and suppose that for every $\gamma>0$ there exists a $\gamma$-pseudo-orbit for $f$ in which $p$ appears at least twice (else we can choose $\gamma_p>0$ such that condition (1) holds for every $\gamma_p$-pseudo-orbit).
	Since $p$ is isolated, we can choose $\epsilon>0$ with $B_\epsilon(p)=\{p\}$, and by supposition, there is $\delta>0$ such that if $\rho(g,f)<\delta$, then every $g$-orbit is $\epsilon$-shadowed by an $f$-orbit. Finally, choose $\delta>\gamma>0$ satisfying the conclusion of Lemma \ref{finite approx}.
	
	By supposition, we can find a $\gamma$-pseudo-orbit $\seq{x_i}$ for $f$ in which $p$ appears at least twice---let $M\in\omega$ and $N>0$ be minimally chosen so that $p=x_M=x_{N+M}$.
	
	Define, for $i<N$, $y_i=x_{M+i}$ and consider the finite sequence $y_0,\ldots, y_{N-1}$. Notice that $y_0=p$, $d(f(y_{N-1}),y_0)<\gamma$, and for all $i<N-1$, we have $d(f(y_i),y_{i+1})<\gamma$. Suppose that there is an element of $X$ that occurs more than once in this list. Choose $n<N-1$ and $t>0$ such that $y_n=y_{n+t}$ and define the finite sequence $y'_0,\ldots y'_{N'-1}$ as follows. Define $N'=N-t$ and 
	\[ y'_i=\begin{cases} 
		y_i & i\leq n \\
		y_{i+t} & i>n 
	\end{cases}
	\]
	and observe that $y'_0=p$, $d(f(y'_{N-1}),z_0)<\gamma$, and  for all $i<N'-1$, we have $d(f(y'_i),y'_{i+1})<\gamma$. By iterating this process as necessary, we arrive at a finite sequence $z_0,\ldots z_{P-1}$ with no repeated elements, such that $z_0=p$, $d(f(z_{P-1}), z_0)<\gamma$, and for $i<P-1$, $d(f(z_{i}), z_{i+1})<\gamma$. It is easy to see then that if we define $z_i=z_{i\mod P}$ for $i\geq P$,  $\seq{z_i}$ is a $\gamma$-pseudo-orbit with $z_{kP}=p$ for all $k\in\omega$ which satisfies the hypotheses of Lemma \ref{finite approx}.
	
	As such, we can find a function $g\in\mathcal C(X)$ with $\rho(g,f)<\delta$ such that $\seq{z_i}$ is a $g$-orbit, and therefore is $\epsilon$-shadowed by the $f$-orbit of some point. But that point and its image under $f^P$ must lie in $B_\epsilon(p)=\{p\}$, and therefore the shadowing point is $p$ itself and $p$ has period $P$.
	
	Now, by uniform continuity of $f, f^2, \ldots f^{P-1}$, choose $\gamma_p>0$ such that $\gamma_p<\min\{\epsilon,\delta'/2\}/P$ and if $d(a,b)<\gamma_p$, then $d(f^i(a),f^i(b))<\min\{\epsilon,\delta'/2\}/P$ for all $i<P$. Now, let $\seq{x_i}$ be a $\gamma_p$-pseudo-orbit for $f$ with $x_M$ in the orbit of $p$ with $k<P$ chosen such that $f^k(x_M)=p$. Then, for $i\leq P$, we have 
	\begin{align*}
		d(x_{M+i},f^i(x_M))&\leq \sum_{j=0}^{i-1} d\left(f^{(i-j)}(x_{M+j}),f^{(i-j-1)}(x_{M+j+1})\right)\\
		&=d(f(x_{M+i-1}),x_{M+i})+\sum_{j=0}^{i-2} d\left(f^{(i-j)}(f(x_{M+j})),f^{(i-j)}(x_{M+j+1})\right)\\
		&<\gamma_p+\sum_{j=0}^{i-2} \min\{\epsilon,\delta'/2\}/P<\min\{\epsilon,\delta'/2\}
	\end{align*}

	In particular, $x_{M+k}\in B_\epsilon(f^k(x_M))=B_\epsilon(p)=\{p\}$ and by induction (applying the above argument to $x_{M+k}=p$), we can establish that $d(x_{M+k+i},f^i(p))<\delta'/2$ for all $i\in\omega$, and thus $\sup\{d({x_{M+i}},f^i(p))\}<\delta'$ as desired.

\end{proof}

With these tools in hand, we can now prove that, in perturbable spaces, shadowing is a structural property of $\mathcal C(X)$. As with the previous lemma, this theorem and its proof are very analogous to Theorem \ref{shadowing in X^X}---again, we include the full proof for completeness.

\begin{theorem} \label{perturbable}
	Let $X$ be a perturbable compact metric space and let $f\in C(X)$. The dynamical system $(X,f)$ has shadowing if and only if for every $\epsilon>0$ there exists $\delta>0$ such that if $g\in \mathcal C(X)$ with $\rho(g,f)<\delta$, then each $g$-orbit  is $\epsilon$-shadowed by an $f$-orbit.
\end{theorem}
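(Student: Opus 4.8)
The plan is to run the proof of Theorem~\ref{shadowing in X^X} almost verbatim, substituting Lemma~\ref{finite approx} for Lemma~\ref{functionally generated} and Lemma~\ref{periodic or nonrepeating} for Lemma~\ref{periodic or not}, and then to insert a single compactness argument at the end to compensate for the fact that Lemma~\ref{finite approx} only realizes finitely many steps of a pseudo-orbit at a time. The forward implication is immediate: if $(X,f)$ has shadowing and $\delta$ witnesses shadowing for $\epsilon$, then whenever $g\in\mathcal C(X)$ has $\rho(g,f)<\delta$, every $g$-orbit is a $\delta$-pseudo-orbit for $f$ by Remark~\ref{nearby} applied to the constant sequence $\seq{f}$, hence is $\epsilon$-shadowed by an $f$-orbit.

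For the converse, assume the stated property and fix $\epsilon>0$. First I would pick $\epsilon/2>\delta>0$ witnessing that $\rho(g,f)<\delta$ (for $g\in\mathcal C(X)$) forces every $g$-orbit to be $\epsilon/2$-shadowed by an $f$-orbit, then pick $\delta>\gamma>0$ as provided by Lemma~\ref{finite approx}, and finally pick $\gamma/3>\beta>0$ with $d(a,b)<\beta\Rightarrow d(f(a),f(b))<\gamma/3$ by uniform continuity of $f$. Setting $S=\{x\in X:B_\beta(x)\text{ is infinite}\}$, the complement $X\setminus S$ is finite and consists of isolated points, so Lemma~\ref{periodic or nonrepeating} applies to each $p\in X\setminus S$ and supplies $\gamma_p>0$ for $\delta'=\gamma/3$; let $\delta_0$ be the least of $\beta$ and the finitely many $\gamma_p$. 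Given a $\delta_0$-pseudo-orbit $\seq{x_i}$, I would build a $\gamma$-pseudo-orbit $\seq{y_i}$ with $\sup_i d(x_i,y_i)\leq\gamma/3$ and satisfying $y_i=y_j\Rightarrow y_{i+1}=y_{j+1}$, exactly as in Theorem~\ref{shadowing in X^X}: if every repetition in $\seq{x_i}$ occurs at a point of $S$, perturb each repeated entry into a fresh point of $B_\beta(x_i)\cap S$ (which is infinite, since $X\setminus S$ is finite); if instead some $p\in X\setminus S$ is repeated, case (2) of Lemma~\ref{periodic or nonrepeating} forces $x_M$ to be periodic for the appropriate minimal $M$ with $\sup_i d(x_{M+i},f^i(x_M))<\gamma/3$, so I would replace the tail from index $M$ on by the genuine periodic $f$-orbit $\seq{f^{i-M}(x_M)}$ and perturb the initial segment as before, keeping it off that finite orbit.

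The key new step comes next. For each $N\in\omega$, Lemma~\ref{finite approx} produces $g_N\in\mathcal C(X)$ with $\rho(g_N,f)<\delta$ and, with $z=x_0$, $g_N^i(z)=y_i$ for all $i\leq N$; applying the hypothesis to $g_N$ yields $q_N\in X$ whose $f$-orbit $\epsilon/2$-shadows the $g_N$-orbit of $z$, so that $d(f^i(q_N),x_i)\leq d(f^i(q_N),g_N^i(z))+d(g_N^i(z),y_i)+d(y_i,x_i)<\epsilon/2+0+\gamma/3$ for every $i\leq N$. Now I would invoke compactness of $X$: pass to a subsequence $q_{N_k}\to q$, and for each fixed $i$, continuity of $f^i$ gives $d(f^i(q),x_i)=\lim_k d(f^i(q_{N_k}),x_i)\leq\epsilon/2+\gamma/3<\epsilon$. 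Hence the $f$-orbit of $q$ $\epsilon$-shadows $\seq{x_i}$, and $\delta_0$ witnesses shadowing for $\epsilon$.

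The main obstacle, and the only essential departure from the proof of Theorem~\ref{shadowing in X^X}, is precisely this last point: in $X^X$ a single function realizes the entire infinite pseudo-orbit $\seq{y_i}$ as an orbit, whereas in $\mathcal C(X)$ Lemma~\ref{finite approx} only yields functions $g_N$ matching the first $N$ steps, so the shadowing point must be recovered as a cluster point of the $q_N$ rather than produced directly. A minor check along the way is that the strict inequality survives the limit, which it does since $\gamma/3<\delta/3<\epsilon/6$, so $\epsilon/2+\gamma/3<\epsilon$ with room to spare.
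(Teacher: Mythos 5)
Your proposal is correct and follows essentially the same route as the paper's proof: the same reduction to a repetition-free (or eventually periodic) nearby $\gamma$-pseudo-orbit via Lemma~\ref{periodic or nonrepeating}, realization of its initial segments by continuous maps $g_N$ via Lemma~\ref{finite approx}, and recovery of the shadowing point as a limit of the $q_N$ by compactness. The only difference is bookkeeping of constants (the paper uses $\epsilon/4$ where you use $\epsilon/2$), and your check that $\epsilon/2+\gamma/3<\epsilon$ survives the limit is valid.
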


\begin{proof}
	As in the proof of Theorem \ref{shadowing in sequences}, if $(X,f)$ has shadowing, then the conclusion follows immediately.
	
	So, let us now assume that $(X,f)$ has the indicated property and fix $\epsilon>0$. Choose $\epsilon/4>\delta>0$ to witness that if $g
	\in\mathcal C(X)$ with $\rho(f,g)<\delta$, then each $g$-orbit is $\epsilon/4$ shadowed by an $f$-orbit. Now, by Lemma \ref{finite approx}, choose $\delta>\gamma>0$ such that if $\seq{x_i}$ is a $\gamma$-pseudo-orbit for $f$ for which $x_i=x_j$ implies $x_{i+1}=x_{j+1}$, then the function $g$ in the conclusion of that lemma satisfies $\rho(g,f)<\delta$.

	Now, by uniform continuity of $f$, fix $\gamma/3>\beta$ such that if $d(a,b)<\beta$, then $d(f(a),f(b))<\gamma/3$, and define
	\[S=\{x\in X:B_\beta(x) \textrm{ is infinite}\}.\]
	
	Observe that $X\setminus S$ is finite---otherwise, it would have a limit point $p$ and a point $q\in X\setminus S$ with $p\in B_\beta(q)$. But then $B_\beta(q)$ is infinite, a contradiction. For each $p\in X\setminus S$, use Lemma \ref{periodic or nonrepeating} to choose $\gamma_p>0$ such that either (1) $|\{i\in\omega:p=x_i\}|\leq 1$ or (2) $p$ is periodic and there exists $M\in\omega$ with $x_M$ in the orbit of $p$ and for any such $M$, $\sup\{d({x_{M+i}},{f^i(x_M)})\}<\gamma/3$.

Now, fix $\delta_0$ to be the lesser of $\beta$ and $\min\{\gamma_p:p\in X\setminus S\}$ and let $\seq{x_i}$ be a $\delta_0$-pseudo-orbit for $f$. We will now define a nearby $\gamma$-pseudo-orbit $\seq{y_i}$ for which $y_i=y_j$ implies $y_{i+1}=y_{j+1}$ to which we will apply Lemma \ref{finite approx} to achieve our claimed results.

There are two cases to consider.

\textbf{Case 1:} $\seq{x_i}$ has the property that if $i<j$ with $x_i= x_j$, then $x_i\in S$. In this case, we proceed inductively, defining $y_0=x_0$ and, once $y_i$ is defined for all $i<n$, we define $y_n=x_n$ if $x_n\notin S$ and choose $y_n\in B_\beta(x_n)\cap S\setminus\{y_i:i<n\}$ otherwise. Clearly, $\sup\{d({y_i},{x_i})\}\leq\beta$ and for $i\in\omega$,
\begin{align*}
	\d(f(y_i),y_{i+1})&\leq d(f(y_i),f(x_i))+d(f(x_i),x_{i+1})+d(x_{i+1},y_{i+1})\\
	&<\gamma/3+\delta_0+\beta<\gamma.
\end{align*}
i.e., $\seq{y_i}$ is a $\gamma$-pseudo-orbit. Furthermore, by construction if $i\neq j$, then $y_i\neq y_j$.

\textbf{Case 2:} There exists $p\in X\setminus S$ and $m<n$ with $x_n=x_m=p$. Choose $M$ minimal so that $x_M$ is in the orbit of some point $p\notin S$ and there exists $M\leq m<n$ with $x_{m}=x_n=p$. By choice of $\delta_0$ and the existence of $m,n$ we are in case (2) of Lemma \ref{periodic or nonrepeating}, and so $x_M$ is periodic and $\sup\{d({x_{M+i}},f^i(x_M))\}<\gamma/3$.

For $i<M$, we choose $y_i$ as in Case 1, with the additional condition that $y_i$ not be in the orbit of $x_M$.

For $i\geq M$, define $y_i=f^{i-M}(x_M)$. It is then clear that $\seq{y_i}$ is a $\gamma$-pseudo-orbit (by the same argument above) and that $\sup\{d({x_i},{y_i})\}\leq\max\{\beta,\gamma/3\}=\gamma/3$. By construction we see that the hypotheses of Lemma \ref{finite approx} are met.

In either case, we have a $\gamma$-pseudo-orbit $\seq{y_i}$ satisfying the hypotheses of Lemma \ref{finite approx} such that $\sup\{d({x_i},{y_i})\}\leq\gamma/3$.

	By applying Lemma \ref{finite approx}, for each $N$, we can fix a continuous $g_N:X\to X$ with $\rho(g_N,f)<\delta$ and $z_N\in X$ such that for all $i\leq N$, $g^i(z_N)=y_i$.
	
	By our choice of $\delta$, for each $N$, there exists $q_N\in X$ such that the $f$-orbit of $q_N$ $\epsilon/4$-shadows the $g$-orbit of $z_N$.  Thus, for $i\leq N$, we have
	\[d(f^i(q_N),x_i)\leq d(f^i(q_N),g_N^i(z_N))+d(g_N^i(z_N),y_i)+d(y_i,x_i)<\epsilon/4+0+\gamma/3<\epsilon/2.\]
	
	By passing to a subsequence if necessary, let $q=\lim_{N\to\infty} q_N$ and observe that for all $i\in\omega$, we have $d(f^i(q),x_i)=\lim_{N\to\infty}d(f^i(q_N),x_i)\leq\epsilon/2<\epsilon$, i.e. the $\delta_0$-pseudo-orbit $\seq{x_i}$ is $\epsilon$-shadowed by the $f$-orbit of $q$. Thus $f$ has shadowing as claimed.
\end{proof}

Again, by Proposition \ref{lc is lp}, we have the following corollary.

\begin{corollary}
	Let $X$ be a compact metric space which is either totally disconnected or  has a c-structure which makes it an m.c.-space and let $f\in C(X)$. The dynamical system $(X,f)$ has shadowing if and only if for every $\epsilon>0$ there exists $\delta>0$ such that if $g\in \mathcal C(X)$ with $\rho(g,f)<\delta$, then each $g$-orbit  is $\epsilon$-shadowed by an $f$-orbit.
\end{corollary}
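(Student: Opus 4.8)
This corollary is intended to be an immediate consequence of the two preceding results, so the plan is simply to chain them together. First I would invoke Proposition \ref{lc is lp}: since $X$ is a compact metric space that is either totally disconnected or carries a c-structure making it an m.c.-space, that proposition tells us directly that $X$ is perturbable. Having established perturbability, the hypotheses of Theorem \ref{perturbable} are met for our space $X$ and our map $f\in C(X)$.

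The second and final step is to apply Theorem \ref{perturbable} verbatim: it asserts exactly the claimed equivalence—that $(X,f)$ has shadowing if and only if for every $\epsilon>0$ there is $\delta>0$ so that every $g\in\mathcal C(X)$ with $\rho(g,f)<\delta$ has all of its orbits $\epsilon$-shadowed by $f$-orbits. Since the statement of the corollary is word-for-word the conclusion of Theorem \ref{perturbable} under the (now verified) standing hypothesis that $X$ is perturbable, there is nothing further to prove.

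I do not anticipate any obstacle here; the content and all the work lie in Proposition \ref{lc is lp} (which in turn rests on Horvath's extension theorem for m.c.-spaces and an elementary argument in the totally disconnected case) and in Theorem \ref{perturbable} (whose proof uses Lemmas \ref{finite approx} and \ref{periodic or nonrepeating}). The corollary itself is a one-line deduction recording the fact that two natural and familiar classes of spaces fall under the scope of the main theorem. If anything, the only thing worth a sentence in the write-up is to explicitly note that ``either totally disconnected or an m.c.-space'' is precisely the disjunction handled by the two cases of Proposition \ref{lc is lp}, so that no additional hypothesis is being smuggled in.

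\begin{proof}
	By Proposition \ref{lc is lp}, a compact metric space that is totally disconnected or admits a c-structure making it an m.c.-space is perturbable. The conclusion is then immediate from Theorem \ref{perturbable}.
\end{proof}
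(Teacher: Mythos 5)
Your proposal is correct and is exactly the paper's argument: the paper derives this corollary in one line by citing Proposition \ref{lc is lp} to conclude that $X$ is perturbable and then applying Theorem \ref{perturbable}. Nothing is missing.
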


The notions of perturbability above are properties of the space under consideration, however they can easily be rephrased as properties of individual functions.

\begin{definition}
	A function $f:X\to X$ on a metric space $X$ is \emph{perturbable} provided that for all $\epsilon>0$, there exists $\delta>0$ such that for every finite subset $F=\{x_i: i\leq N\}\subseteq X$ and every function $g_0:F\to X$ with $\rho(g_0,f|_{F})<\delta$, there exists a continuous function $g:X\to X$ with $\rho(f,g)<\epsilon$ and $g|_F=g_0$.
\end{definition}

Replacing the perturbability of $X$ with perturbability of $f$ in Lemma \ref{finite approx} and Theorem \ref{perturbable} yields the following.

\begin{theorem}
	Let $X$ be a compact metric space and $f\in C(X)$ be perturbable. The dynamical system $(X,f)$ has shadowing if and only if for every $\epsilon>0$ there exists $\delta>0$ such that if $g\in \mathcal C(X)$ with $\rho(g,f)<\delta$, then each $g$-orbit  is $\epsilon$-shadowed by an $f$-orbit.
\end{theorem}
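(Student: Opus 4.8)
The plan is to observe that the proof of Theorem~\ref{perturbable} never uses the full strength of the perturbability of the \emph{space}: every appeal to perturbability there is, directly or indirectly, an application of Lemma~\ref{finite approx}, and Lemma~\ref{finite approx} only ever perturbs the \emph{single} function $f$ on a finite set. So the first step is to restate Lemma~\ref{finite approx} with the hypothesis ``$X$ is perturbable'' replaced by ``$f$ is perturbable,'' and reprove it verbatim: given $\delta>0$, let $\gamma>0$ witness the perturbability of $f$ for $\delta$; given a $\gamma$-pseudo-orbit $\seq{x_i}$ for $f$ with $x_i=x_j$ implying $x_{i+1}=x_{j+1}$, and a natural number $N$, set $F=\{x_i:i<N\}$ and define $g_0:F\to X$ by $g_0(x_i)=x_{i+1}$. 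This is well defined by the consistency hypothesis and satisfies $\rho(g_0,f|_F)<\gamma$ because $\seq{x_i}$ is a $\gamma$-pseudo-orbit, so perturbability of $f$ supplies a continuous $g:X\to X$ with $\rho(g,f)<\delta$ and $g|_F=g_0$; then $z=x_0$ gives $g^i(z)=x_i$ for $i\leq N$.

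Next I would observe that Lemma~\ref{periodic or nonrepeating}, whose statement also assumes ``$X$ is perturbable,'' uses this assumption only through the line ``choose $\delta>\gamma>0$ satisfying the conclusion of Lemma~\ref{finite approx}''---i.e. it too needs only the perturbability of $f$. Hence the version of Lemma~\ref{periodic or nonrepeating} with ``$X$ perturbable'' weakened to ``$f$ perturbable'' holds with exactly the same proof, including the subsequent uniform-continuity estimate on $f,f^2,\dots,f^{P-1}$, which is unaffected.

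Finally, the proof of Theorem~\ref{perturbable} itself invokes perturbability in precisely two places: the choice of $\gamma$ via Lemma~\ref{finite approx}, and the appeal to Lemma~\ref{periodic or nonrepeating} to select the constants $\gamma_p$ for $p\in X\setminus S$. By the previous two paragraphs both remain available under the weaker hypothesis that $f$ is perturbable. Everything else---the set $S=\{x\in X:B_\beta(x)\text{ is infinite}\}$, the finiteness of $X\setminus S$, the two-case construction of a nearby consistent $\gamma$-pseudo-orbit $\seq{y_i}$ with $\sup\{d(x_i,y_i)\}\leq\gamma/3$, the application of (the new) Lemma~\ref{finite approx} to produce continuous $g_N$ with $\rho(g_N,f)<\delta$ and points $z_N$, and the compactness argument extracting $q=\lim_{N\to\infty}q_N$ from the shadowing points---uses only compactness of $X$ and uniform continuity of $f$. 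Thus the theorem follows by repeating the proof of Theorem~\ref{perturbable} with these two substitutions. I do not anticipate a genuine obstacle here; the one point that must be checked carefully is that Lemma~\ref{finite approx} truly only ever perturbs $f$ and never re-perturbs the auxiliary functions $g_N$---and indeed each $g_N$ occurs only as the system being shadowed, never as the base point of a further perturbation, so the reduction is clean.
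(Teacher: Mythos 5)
Your proposal is correct and is exactly the route the paper takes: the paper's entire proof of this theorem is the remark that replacing perturbability of $X$ with perturbability of $f$ in Lemma \ref{finite approx} and Theorem \ref{perturbable} yields the result, and your careful check that perturbability is only ever invoked to perturb the single function $f$ on a finite set (never any of the auxiliary functions $g_N$) is precisely the verification that remark relies on.
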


\section{Other Results} \label{other}

There has been significant recent interest in the study of variations of the shadowing property \cite{BMR-Variations, sakai, Oprocha-Thick, OprochaDastjerdiHosseini}. Broadly, many of these variations arise by expanding or restricting the class of `pseudo-orbits' that need to be shadowed (or the degree to which they need to be shadowed). By Remark \ref{nearby}, if $(X,f)$ is a dynamical system and $g$ is a nearby function, then the orbits of $g$ are pseudo-orbits for $f$, and these pseudo-orbits are a proper subset of the full collection of pseudo-orbits. This observation motivates the following variations of the shadowing property.

\begin{definition}
	For $f\in X^X$ and $\delta>0$, a \emph{functionally generated $\delta$-pseudo-orbit} is a sequence $\seq{x_i}$ such that there exists $g\in X^X$ with $\rho(f,g)<\delta$ and $z\in X$ such that $x_i=g^i(z)$ for all $i\in \omega$.
\end{definition}

Functionally generated pseudo-orbits are a very natural type of pseudo-orbit to consider. They arise naturally in finite precision computations of orbits, for example.
Notice that the functionally generated pseudo-orbits are a proper subset of the collection of all pseudo-orbits, and thus the following notion of shadowing is, \emph{prima facie}, weaker than the standard shadowing property.

\begin{definition}
	A dynamical system $(X,f)$ has the \emph{finitely generated pseudo-orbit tracing property (FGPOTP)} provided that for all $\epsilon>0$ there exists $\delta>0$ such that every functionally generated $\delta$-pseudo-orbit $\seq{x_i}$, there exists $z\in X$ such that the orbit of $z$ under $f$ $\epsilon$-shadows $\seq{x_i}$.
\end{definition}

Of course, we can restrict the class of pseudo-orbits under consideration further by requiring them to be generated by nearby continuous maps.

\begin{definition}
	For $f\in X^X$ and $\delta>0$, a \emph{continuously generated $\delta$-pseudo-orbit} is a sequence $\seq{x_i}$ such that there exists $g\in \mathcal C(X)$ with $\rho(f,g)<\delta$ and $z\in X$ such that $x_i=g^i(z)$ for all $i\in \omega$.
\end{definition}

Restricting our class of pseudo-orbits to the continuously generated ones gives rise to the following even weaker notion of shadowing.

\begin{definition}
	A dynamical system $(X,f)$ has the \emph{continuously generated pseudo-orbit tracing property (CGPOTP)} provided that for all $\epsilon>0$ there exists $\delta>0$ such that every functionally generated $\delta$-pseudo-orbit $\seq{x_i}$, there exists $z\in X$ such that the orbit of $z$ under $f$ $\epsilon$-shadows $\seq{x_i}$.
\end{definition}

With these notions in mind, Theorems \ref{shadowing in X^X} and Theorem \ref{perturbable} immediately give us the following result.

\begin{corollary}
	Let $f\in \mathcal C(X)$. Then
	\begin{enumerate}
		\item $f$ has shadowing if and only if it has FGPOTP.
		\item If $f$ has FGPOTP, then it has CGPOTP.
		\item If $X$ is perturbable and $f$ has CGPTOP, then it has shadowing and hence FGPOTP.
	\end{enumerate}
\end{corollary}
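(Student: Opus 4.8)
The plan is to observe that each of FGPOTP and CGPOTP is, after unwinding the definition of a (functionally or continuously) generated pseudo-orbit, a verbatim restatement of a tracing condition already characterized in this paper; the corollary is then little more than a dictionary entry, with all of the genuine dynamical content supplied by Theorems \ref{shadowing in X^X} and \ref{perturbable}.

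For part (1), I would first record the key reformulation: a sequence $\seq{x_i}$ is a functionally generated $\delta$-pseudo-orbit for $f$ precisely when there exist $g\in X^X$ with $\rho(f,g)<\delta$ and $z\in X$ with $\seq{x_i}=\seq{g^i(z)}$, i.e. exactly when $\seq{x_i}$ is a $g$-orbit for some $g$ in the $\delta$-ball about $f$ in $X^X$. Consequently the assertion ``every functionally generated $\delta$-pseudo-orbit is $\epsilon$-shadowed by an $f$-orbit'' is, word for word, the assertion ``for every $g\in X^X$ with $\rho(g,f)<\delta$, each $g$-orbit is $\epsilon$-shadowed by an $f$-orbit.'' Quantifying over $\epsilon$ and $\delta$, FGPOTP is literally the right-hand condition of Theorem \ref{shadowing in X^X}, so $f$ has FGPOTP if and only if $f$ has shadowing. (The forward implication may alternatively be extracted from Remark \ref{nearby}, since a functionally generated $\delta$-pseudo-orbit is in particular a $\delta$-pseudo-orbit and hence traced by shadowing.) For part (2), since $\mathcal C(X)\subseteq X^X$, every continuously generated $\delta$-pseudo-orbit is also a functionally generated $\delta$-pseudo-orbit; thus for a fixed $\epsilon$ any $\delta$ witnessing FGPOTP also witnesses CGPOTP, and the implication holds with no change of constants. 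For part (3), the same unwinding as in part (1)---now with $g$ ranging over $\mathcal C(X)$ rather than $X^X$---identifies CGPOTP with the right-hand condition of Theorem \ref{perturbable}; since $X$ is assumed perturbable, that theorem applies and yields that CGPOTP implies shadowing, and part (1) then upgrades shadowing to FGPOTP.

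I do not expect a real obstacle here: the only point requiring care is to match the quantifier structure of FGPOTP and CGPOTP (for all $\epsilon$, there exists $\delta$, for all generated pseudo-orbits of that scale) exactly against the quantifier structure in Theorems \ref{shadowing in X^X} and \ref{perturbable}, which it does precisely because ``each $g$-orbit for all admissible $g$'' ranges over exactly the generated pseudo-orbits. Everything else is bookkeeping.
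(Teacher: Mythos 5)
Your proposal is correct and is exactly the argument the paper intends: the paper offers no separate proof, stating only that Theorems \ref{shadowing in X^X} and \ref{perturbable} "immediately give" the corollary, and your unwinding of FGPOTP and CGPOTP as verbatim restatements of the tracing conditions in those theorems (plus $\mathcal C(X)\subseteq X^X$ for part (2)) is precisely that immediate deduction. The only incidental point is that you correctly read the paper's definition of CGPOTP as quantifying over \emph{continuously} generated pseudo-orbits, despite the evident typo ("functionally") in that definition.
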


Of course, it would be interesting to know how `sharp' this result is.

\begin{question} \label{First Q}
	Are there systems which have CGPOTP but not FGPOTP?
\end{question}

Another interesting viewpoint of the results from Section \ref{C(X)} comes about by considering the \emph{set} of orbits generated by a dynamical system. Orbits for $(X,f)$ belong to the set $X^\omega$, and if we wish to consider shadowing-type properties, the natural metric on $X^\omega$ to consider is the \emph{uniform metric}, i.e. \[\rho(\seq{x_i},\seq{y_i})=\sup_{i\in\omega}\{d(x_i,y_i)\}.\]
We can then define the function $\mathcal O:\mathcal C(X)\to X^\omega$ which maps a function to its set of orbits, i.e. $\mathcal O(f)=\{\seq{f^i(x)}:x\in X\}.$

When considering set-valued functions between topological spaces, there are several distinct generalizations of continuity. For the purposes of this paper, we consider the notion of \emph{upper Hausdorff semicontinuity}. It is worth mentioning that for non-compact codomains, upper Hausdorff semicontinuity is generally weaker than upper semicontinuity \cite{DoleckiRolewiczUSC}.

\begin{definition}
	A set-valued function $g:A\to B$ between metric spaces $(A,d_A)$ and $(B,d_B)$ is \emph{upper Hausdorff semicontinuous} at $a\in A$ provided that for all $\epsilon>0$ there exists $\delta>0$ such that if $d_A(a,x)<\delta$, then $g(x)\subseteq B_\epsilon(g(a))$.
\end{definition}

Using Theorem \ref{perturbable}, we can prove the following result.

\begin{theorem} \label{usc}
	Let $X$ be a compact metric space  which is perturbable and let $f\in C(X)$. Then $f$ has shadowing if and only if $\mathcal O:\mathcal C(X)\to X^\omega$ is upper Hausdorff semicontinuous at $f$.
\end{theorem}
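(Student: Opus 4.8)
The plan is to translate the two directions of the equivalence directly through Theorem~\ref{perturbable}, using the key observation that the set $\mathcal O(g)$ being $\epsilon$-close (in the Hausdorff sense on $X^\omega$ with the uniform metric) to $\mathcal O(f)$ means precisely that \emph{every} $g$-orbit is $\epsilon$-shadowed by an $f$-orbit. So the content of ``$\mathcal O$ is upper Hausdorff semicontinuous at $f$'' unpacks to: for all $\epsilon>0$ there is $\delta>0$ such that $\rho(g,f)<\delta$ in $\mathcal C(X)$ implies $\mathcal O(g)\subseteq B_\epsilon(\mathcal O(f))$, and the latter inclusion says each sequence $\seq{g^i(x)}$ lies within uniform distance $\epsilon$ of some sequence $\seq{f^i(z)}$, i.e. is $\epsilon$-shadowed by an $f$-orbit. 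Once this dictionary is in place, the statement is essentially a restatement of Theorem~\ref{perturbable}.

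First I would note one small subtlety in the unpacking: upper Hausdorff semicontinuity as defined requires $\mathcal O(g)\subseteq B_\epsilon(\mathcal O(f))$, where $B_\epsilon(\mathcal O(f))$ is the $\epsilon$-neighborhood of the set $\mathcal O(f)$ in $X^\omega$; a sequence $\seq{y_i}$ lies in this neighborhood iff there is a point of $\mathcal O(f)$, i.e. an $f$-orbit $\seq{f^i(z)}$, with $\sup_i d(y_i,f^i(z))<\epsilon$ (or $\le\epsilon$; since one can absorb the strict/non-strict discrepancy by shrinking $\epsilon$, this is harmless). I would state this unpacking as a short preliminary paragraph or inline remark and then proceed.

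For the forward direction: assume $f$ has shadowing. Fix $\epsilon>0$. By Theorem~\ref{perturbable} (the easy direction there, which follows from Remark~\ref{nearby}), there is $\delta>0$ such that whenever $g\in\mathcal C(X)$ with $\rho(g,f)<\delta$, each $g$-orbit is $\epsilon/2$-shadowed by an $f$-orbit; hence $\mathcal O(g)\subseteq B_\epsilon(\mathcal O(f))$ and $\mathcal O$ is upper Hausdorff semicontinuous at $f$. Actually one can invoke the ``only if'' half of Theorem~\ref{perturbable} directly. For the reverse direction: assume $\mathcal O$ is upper Hausdorff semicontinuous at $f$. Then for every $\epsilon>0$ there is $\delta>0$ with $\rho(g,f)<\delta\Rightarrow\mathcal O(g)\subseteq B_\epsilon(\mathcal O(f))$, which by the dictionary says each $g$-orbit is $\epsilon$-shadowed by an $f$-orbit — precisely the hypothesis of the ``if'' direction of Theorem~\ref{perturbable}. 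Since $X$ is perturbable, Theorem~\ref{perturbable} gives that $(X,f)$ has shadowing.

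I do not expect a genuine obstacle here; the proof is a two-line invocation of Theorem~\ref{perturbable} once the definition of upper Hausdorff semicontinuity is unwound. The only place to be slightly careful — and the ``hardest'' part, such as it is — is matching the quantifier conventions (strict versus non-strict inequalities, and the ``$\epsilon$-neighborhood of a set'' versus ``each element is $\epsilon$-shadowed by some element'' phrasings) so that the hypotheses of Theorem~\ref{perturbable} are met verbatim; this is handled by the standard trick of replacing $\epsilon$ by $\epsilon/2$ wherever a strict inequality is needed. I would therefore present the proof as: (i) record the equivalence ``$\mathcal O(g)\subseteq B_\epsilon(\mathcal O(f))$ iff every $g$-orbit is $\epsilon$-shadowed by an $f$-orbit''; (ii) observe that with this equivalence the condition ``$\mathcal O$ is upper Hausdorff semicontinuous at $f$'' is word-for-word the right-hand side of Theorem~\ref{perturbable}; (iii) conclude by citing Theorem~\ref{perturbable}.
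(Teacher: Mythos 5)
Your proposal is correct and follows essentially the same route as the paper: the forward direction via Remark~\ref{nearby} (equivalently the easy half of Theorem~\ref{perturbable}) to place each $g$-orbit within $\epsilon/2$ of $\mathcal O(f)$, and the reverse direction by unpacking upper Hausdorff semicontinuity into exactly the hypothesis of Theorem~\ref{perturbable} and citing that theorem. Your attention to the strict/non-strict inequality bookkeeping matches the paper's use of $\epsilon/2$ in the forward direction.
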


\begin{proof}
First, assume that $f$ has shadowing. Fix $\epsilon>0$ and let $\delta>0$ witness that every $\delta$-pseudo-orbit is $\epsilon/2$-shadowed. Observe that if $\rho(f,g)<\delta$,  and $\seq{x_i}\in\mathcal O(g)$, then by Remark \ref{nearby}, $\seq{x_i}$ is a $\delta$-pseudo-orbit for $f$, and as such, there exists $z\in X$ such that the $f$-orbit of $z$ $\epsilon/2$-shadows $\seq{x_i}$. But then $\rho(\seq{x_i},\seq{f^i(z)})\leq\epsilon/2<\epsilon$, and since $\seq{f^i(z)}\in\mathcal O(f)$, we have that $\seq{x_i}\in B_\epsilon(\mathcal O(f))$. Thus $\mathcal O$ is upper Hausdorff semicontinuous at $f$.

Now, suppose that $\mathcal O$ is upper Hausdorff semicontinuous at $f$. Fix $\epsilon>0$ and choose $\delta>0$ so that if $\rho(f,g)<\delta$, then $\mathcal O(g)\subseteq B_{\epsilon}(\mathcal O(f))$ . Let $g\in\mathcal C(X)$ with $\rho(f,g)<\delta$ and let $x\in X$. Then $\seq{g^i(x)}\in\mathcal O(g)\in B_{\epsilon}(\mathcal O(f))$. In particular, there exists $z\in X$ such that $\rho(\seq{g^i(x)},\seq{f^i(z)})<\epsilon$. In particular, for each $i\in\omega$, $d(g^i(x),f^i(z))<\epsilon$, i.e. the $g$-orbit of $x$ is $\epsilon$-shadowed by the $f$-orbit of $z$. Thus, by Theorem \ref{perturbable}, $f$ has shadowing.
\end{proof}

\subsection*{Acknowledgements} The author would like to thank Professor Jim Wiseman for helpful feedback and discussion in the revision of this paper.

\bibliographystyle{plain}
\bibliography{../../ComprehensiveBib}

\end{document}